 \newtheorem{theorem}{Theorem}[section]
\newtheorem{lemma}[theorem]{Lemma}
\newtheorem{corollary}[theorem]{Corollary}
 \theoremstyle{definition}
 \newtheorem{definition}[theorem]{Definition}
 \newtheorem{exampl}[theorem]{Example}
\newcommand{\Soc}{\operatorname{Soc}}
\newcommand{\spn}{\operatorname{span}}
\newcommand{\Tor}{\operatorname{Tor}}
\begin{document}

\title{On the Betti numbers and Rees algebras of ideals with linear powers}

\author{Lisa Nicklasson \\
\small{Max-Planck Institute for Mathematics in the Sciences}\\
\small{lisa.nicklasson@mis.mpg.de}}
%
%\aff{L. Nicklasson} 
%              Max-Planck Institute for Mathematics in the Sciences\\
%              Leipzig, Germany\\
%              \email{lisa.nicklasson@mis.mpg.de}
%              \and
%              L. Nicklasson \at
%              Department of Mathematics \\
%              Stockholm University, Sweden          
%}
\maketitle

\begin{abstract}
 An ideal $I \subset \mathbb{k}[x_1, \ldots, x_n]$ is said to have linear powers if $I^k$ has a linear minimal free resolution, for all integers $k>0$. In this paper we study the Betti numbers of $I^k$, for ideals $I$ with linear powers. We provide linear relations on the Betti numbers, which holds for all ideals with linear powers. This is especially useful for ideals of low dimension. The Betti numbers are computed explicitly, as polynomials in $k$, for the ideal generated by all square free monomials of degree $d$, for $d=2, 3$ or $n-1$, and the product of all ideals generated by $s$ variables, for $s=n-1$ or $n-2$. We also study the generators of the Rees ideal, for ideals with linear powers. Especially, we are interested in ideals for which the Rees ideal is generated by quadratic elements. This problem is related to a conjecture on matroids by White. 
% \keywords{Betti numbers \and Linear resolutions \and Rees algebras \and Polymatroidal ideals}
% \subclass{13D02, 13A30}
\end{abstract}

\section{Introduction}

Let $S=\mathbb{k}[x_1, \ldots, x_n]$ be a polynomial ring over a field $\mathbb{k}$. For a homogeneous ideal $I \subset S$, recall that the numbers $\beta_1, \ldots, \beta_n$ in a minimal free resolution
\[
 0 \to S^{\beta_n} \to \dots \to S^{\beta_1} \to S \to S/I \to 0 
\]
are called the \emph{Betti numbers of $S/I$}. We will denote the $i$-th Betti number of $S/I$ by $\beta_i(S/I)$. In \cite{kodi} the following famous result is proved. 

\begin{theorem}
Let $I\subset S$ be a homogeneous ideal. The Betti numbers $\beta_i(S/I^k)$, seen as functions of $k$, are polynomials in $k$, for $k$ large enough. 
\end{theorem}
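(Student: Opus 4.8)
The plan is to follow Kodiyalam's strategy: assemble the modules $\Tor_i^S(\mathbb{k}, I^k)$ over all $k$ into a single finitely generated graded module over the fiber cone of $I$, and then invoke the Hilbert polynomial theorem. First I would pass to $\Tor$. Since $\beta_i(S/I^k) = \dim_{\mathbb{k}} \Tor_i^S(\mathbb{k}, S/I^k)$, and the short exact sequence $0 \to I^k \to S \to S/I^k \to 0$ yields $\Tor_i^S(\mathbb{k}, S/I^k) \cong \Tor_{i-1}^S(\mathbb{k}, I^k)$ for every $i \geq 1$ (using that $I^k \subseteq \mathfrak{m}$), it suffices to prove that $k \mapsto \dim_{\mathbb{k}} \Tor_i^S(\mathbb{k}, I^k)$ is eventually polynomial, for each fixed homological degree $i$.

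Next I would introduce the Rees algebra $R = \bigoplus_{k \geq 0} I^k$, regarded as a graded $S$-algebra in which $I^k$ sits in external degree $k$ while each variable $x_j$ has external degree $0$. Because $\Tor$ commutes with direct sums, $\Tor_i^S(\mathbb{k}, R) = \bigoplus_{k} \Tor_i^S(\mathbb{k}, I^k)$, so all the Betti numbers in question appear as the graded pieces of one object. The key is that this object carries extra structure: I would compute it as the homology of the Koszul complex $K_\bullet \otimes_S R$ on $x_1, \ldots, x_n$, a complex of finite free $R$-modules. Since $R$ is a finitely generated algebra over the Noetherian ring $S$, it is itself Noetherian, so each homology module $\Tor_i^S(\mathbb{k}, R)$ is a finitely generated $R$-module. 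As Koszul homology on $x_1, \ldots, x_n$ is annihilated by $\mathfrak{m} = (x_1, \ldots, x_n)$, the module $\Tor_i^S(\mathbb{k}, R)$ is in fact a finitely generated graded module over the fiber cone $F = R/\mathfrak{m}R = \bigoplus_k I^k/\mathfrak{m}I^k$.

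Finally I would observe that $F$ is a standard graded $\mathbb{k}$-algebra: it is generated over $F_0 = \mathbb{k}$ in external degree $1$, because $I^k = I \cdot I^{k-1}$ forces $I^k/\mathfrak{m}I^k$ to be spanned by products of degree-one elements. Since $\mathfrak{m}$ lives in external degree $0$, the external grading on $F$ is exactly the power $k$, and the $F$-action raises external degree by the correct amount. Hence the Hilbert function $k \mapsto \dim_{\mathbb{k}} \left[\Tor_i^S(\mathbb{k}, R)\right]_k$ of a finitely generated graded module over a standard graded algebra agrees with a polynomial in $k$ for $k \gg 0$, by the Hilbert--Serre theorem. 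This is precisely the eventual polynomiality of $\beta_i(S/I^k)$.

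I expect the main obstacle to be the grading bookkeeping together with the finite-generation claim, rather than the closing appeal to Hilbert theory. One must set up the bigrading on $R$ so that $\mathfrak{m}$ sits in external degree $0$, guaranteeing that the passage to $F$ does not disturb the $k$-grading, and one must argue carefully that Noetherianity of $R$ forces each Koszul homology module to be finitely generated over the fiber cone $F$ and not merely over $R$. Once finite generation over the standard graded algebra $F$ is secured, the conclusion follows from the standard theory of Hilbert functions.
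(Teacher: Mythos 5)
The paper does not actually prove this theorem---it is quoted directly from Kodiyalam \cite{kodi}---so there is no internal proof to compare against; your argument is a correct reconstruction of exactly the strategy in the cited source: reduce to $\Tor_{i-1}^S(\mathbb{k},I^k)$, realize $\bigoplus_k \Tor_i^S(\mathbb{k},I^k)$ as Koszul homology of the Rees algebra, pass to the fiber cone, and apply Hilbert--Serre. The only step you flag as delicate is in fact immediate: a module finitely generated over $R$ and annihilated by $\mathfrak{m}R$ is finitely generated over $R/\mathfrak{m}R$ by the same generators, so your proof is complete as outlined.
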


Given some nice class of ideals, one may ask if it is possible to give explicit expressions for these polynomials. In this paper we will attempt to answer this question for ideals with \emph{linear powers}. 

\begin{definition}
A homogeneous ideal $I \subseteq S$  is said to have \emph{linear resolution} if all the maps $\partial_i$ with $i>1$ in a minimal free resolution
\[
 0 {\to} S^{\beta_n} \overset{\partial_n}{\to} S^{\beta_{n-1}} \to  \dots \to S^{\beta_1} \overset{\partial_1}{\to} S \to S/I \to 0 
\]
of $S/I$ are represented by matrices whose non-zero entries have degree one. Moreover, the ideal $I$ has \emph{linear powers} if $I^k$ has linear resolution, for all $k$. 
\end{definition}

Two examples of ideals with linear powers, that will be of special interest in this paper, are the following two classes.

{\bf A:} \emph{The ideals generated by all square-free monomials of degree $d$.} 

{\bf B:} \emph{The products of all ideals generated by $s$-sized subsets of the variables. }

In Section \ref{sec:betti} we will see how the Betti numbers of ideals with linear powers can be computed using the Herzog-Kühl equations. Typically, this is a suitable approach for ideals of low dimension. Application of this result, together with a method involving Rees algebras, gives explicit expressions for $\beta_i(S/I^k)$, as polynomials in $k$, for the ideals in {\bf A} with $d=2,3,$ and $n-1$, and the ideals in {\bf B} with $s=n-1$ and $n-2$. 

Both {\bf A} and {\bf B} are subclasses of a larger class of ideals with linear powers, namely the \emph{polymatroidal ideals}.

\begin{definition}
 Let $I$ be a monomial ideal, and let $\mathcal G(I)$ be the minimal generating set of monomials. The ideal $I$ is called \emph{polymatroidal} if all the monomials in $\mathcal{G}(I)$ have the same degree and the following condition is satisfied. Let $x_1^{a_1} \cdots x_n^{a_n}$ and $x_1^{b_1} \cdots x_n^{b_n}$ be two monomials in $\mathcal G(I)$, such that $a_i > b_i$ for some $i$. Then there is a $j$ such that $a_j<b_j$ and $(x_j/x_i)x_1^{a_1} \cdots x_n^{a_n} \in \mathcal{G}(I)$. 
\end{definition}

It was proved in \cite{herz-tak} that polymatroidal ideals have linear resolution. Combined with the result from \cite{con-herz}, that products of polymatroidal ideals are polymatroidal, it follows that polymatroidal ideals have linear powers. Another class of ideals with linear powers, which is also found in \cite{con-herz}, is the products of ideals generated by linear forms. 

A third class of ideals with linear powers is a certain type of edge ideals of graphs. We recall the definition of an edge ideal. Let $G$ be a graph with the vertex set $V=\{x_1, \ldots, x_n\}$. The \emph{edge ideal} of $G$, denoted $I(G)$, is the monomial ideal generated by all $x_ix_j$ such that $\{x_i,x_j\}$ is an edge in $G$. The \emph{complementary graph} of $G$ is the graph with the same vertex set, and whose edges are all $\{x_i,x_j\}$ that are \emph{not} edges of $G$. A graph is called \emph{chordal} if every cycle of length at least four has a chord. It was proved in \cite{frob} that an edge ideal $I(G)$ has a linear resolution if and only if the complementary graph of $G$ is chordal. In fact, these ideals have linear powers. This follows from the result in \cite{herz-hibi-zhe}, which states that monomial ideals generated in degree two, with linear resolution, have linear powers.  

Recall that the \emph{Rees algebra} of an ideal $I \subset S$ is defined as
\[
 R(I) = \bigoplus_{j \ge 0} I^jt^j \subset S[t].
\]
If $I=(f_1, \ldots, f_m)$ we have $R(I) = S[f_1t, \ldots, f_mt]$. Let $T=S[y_1, \ldots, y_m]$, with the bigrading $\deg x_i= (1,0)$ and $\deg y_i=(0,1)$, and define a homomorphism $\phi:T \to S[t]$ by $\phi(x_i)=x_i$ and $\phi(y_i)=f_it$. Then the image of $\phi$ is $R(I)$, and hence $R(I) \cong T/J$ where $J= \ker \phi$. The ideal $J$ is called the \emph{Rees ideal} of $I$. If the $f_i$'s are all of the same degree, then $J$ is a homogeneous ideal. If the $f_i$'s are monomials, then $J$ is a binomial ideal. 

It was asked in \cite{bru-con} if the Rees ideal of a polymatroidal ideal is generated in the degrees $(0,2)$ and $(1,1)$. This question is a generalization of White's conjecture, \cite{white}, on matroids. For further discussion on White's conjecture, and its connection to polymatroidal ideals, see \cite{bru-con} and \cite{herz-hibi}. Inspired by this question, we devote Section \ref{sec:rees} to the study of ideals with linear powers, not necessarily polymatroidal, with the property that their Rees ideals are generated in the degrees $(0,2)$ and $(1,1)$.

\section{The Betti numbers of ideals with linear powers}\label{sec:betti}
\subsection{General result}
Let $S=\mathbb{k}[x_1, \ldots, x_n]$, and let $I \subset S$ be a homogeneous ideal with linear powers, generated in degree $d$. The Betti numbers of $S/I$ satisfy certain linear equations known as the \emph{Herzog-Kühl}-equations. These equations were first studied in \cite{herz-kuhl}. We will review how to obtain the Herzog-Kühl equations, for $I^k$. The ideal $I^k$ is generated in degree $dk$, and has the minimal graded free resolution \[
 0\!  \to S(-(dk+n-1))^{\beta_n^{(k)}} \!\!\!\to \dots  \to S(-(dk+1))^{\beta_2^{(k)}} \!\!\!\to S(-dk)^{\beta_1^{(k)}} \!\!\! \to S \to S/I^k \to 0 \ .
\]
The Hilbert series of $S/I^k$ is given by 
\[
 \frac{1+\sum_{i=1}^n (-1)^i \beta_i^{(k)} t^{dk+i-1}}{(1-t)^n} = \frac{p(t)}{(1-t)^{\delta}} \ ,
\]
where $p(t)$ is a polynomial, and $\delta=\dim I$. We rewrite this as
\[
 1+\sum_{i=1}^n (-1)^i \beta_i^{(k)}t^{dk+i-1} = (1-t)^{n-\delta}p(t) \ ,
\]
and we see that $t=1$ is a zero of the right hand side, and of its $n-\delta-1$ first derivatives. This gives us the $n-\delta$ Herzog-Kühl equations 
\begin{equation}\label{eq:hk_sys} 
  \left\{
 \begin{array}{rl}
  \sum_{i=1}^n(-1)^i\beta_i^{(k)} &=-1 \\[1ex]
  \sum_{i=1}^n(-1)^i(dk+i-1)\beta_i^{(k)} &=0 \\[1ex]
  \sum_{i=1}^n(-1)^i(dk+i-1)(dk+i-2)\beta_i^{(k)} &=0 \\[1ex]
  \vdots & \\
  \sum_{i=1}^n(-1)^i(dk+i-1)\cdots (dk+i-(n-\delta-1))\beta_i^{(k)} &=0 \ .\\[1ex]
 \end{array}
\right.
\end{equation}
This system of equations can actually be written on a very nice form, as we will see in Theorem \ref{thm:main} below. We will use the convention $\binom{j}{i}=0$ when $i>j$, for binomial coefficients. Also, we will use the notation $[c_{ij}]$ for the matrix with the element $c_{ij}$ on row $i$, column $j$. Similarly, we use the notation $[c_i]$ for column vectors.  

\begin{theorem}\label{thm:main}
Let $S=\mathbb{k}[x_1, \ldots, x_n]$. For a homogeneous ideal $I \subseteq S$ with linear powers, generated in degree $d$, define the column vector ${ \beta^{(k)}}=(\beta_i(S/I^k))_{1 \le i \le n}$. Then 
\[
 \left[(-1)^{i+j} \binom{j}{i}\right]_{\substack{ 0 \le i <n-\dim I \\ 0 \le j < n}} { \beta^{(k)}} =   \left[ \binom{dk+i-1}{i} \right]_{0 \le i < n- \dim I \ \textstyle{.}}
\]
\end{theorem}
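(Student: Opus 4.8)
The plan is to show that the stated matrix identity is nothing more than a repackaging of the vanishing information already encoded in the Hilbert series, but extracted at $t=1$ in a single stroke rather than derivative by derivative. Write $\delta=\dim I$ and set
\[
f(t) = 1 + \sum_{i=1}^n (-1)^i \beta_i^{(k)} t^{dk+i-1},
\]
so that the computation preceding \eqref{eq:hk_sys} shows $f(t)=(1-t)^{n-\delta}p(t)$; in particular $(1-t)^{n-\delta}$ divides $f(t)$. The entire content of the Herzog--Kühl system is exactly this divisibility, and I would exploit it all at once by substituting $t=1+z$ and comparing power-series coefficients in $z$ up to order $n-\delta-1$.

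First I would record the factorisation
\[
f(1+z) - 1 = \sum_{i=1}^n (-1)^i \beta_i^{(k)} (1+z)^{dk+i-1} = (1+z)^{dk} g(z), \qquad g(z) := \sum_{i=1}^n (-1)^i \beta_i^{(k)} (1+z)^{i-1}.
\]
Since $f(t)=(1-t)^{n-\delta}p(t)$ gives $f(1+z)=(-z)^{n-\delta}p(1+z)$, the series $f(1+z)$ is divisible by $z^{n-\delta}$, hence $f(1+z)-1 \equiv -1 \pmod{z^{n-\delta}}$. Multiplying by the unit $(1+z)^{-dk}$ then yields the key congruence $g(z) \equiv -(1+z)^{-dk} \pmod{z^{n-\delta}}$.

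The proof then reduces to extracting the coefficient of $z^r$ from both sides for each $0 \le r < n-\delta$. On the left, expanding $(1+z)^{i-1}=\sum_r \binom{i-1}{r}z^r$ gives $[z^r]\,g(z)=\sum_{i=1}^n (-1)^i \binom{i-1}{r}\beta_i^{(k)}$; on the right, the upper-negation identity $\binom{-dk}{r}=(-1)^r\binom{dk+r-1}{r}$ gives $[z^r]\bigl(-(1+z)^{-dk}\bigr)=(-1)^{r+1}\binom{dk+r-1}{r}$. Re-indexing the columns by $j=i-1$, so that the $j$-th column of the matrix pairs with $\beta_{j+1}^{(k)}$, and pulling the sign $(-1)^{r}$ through, turns the left-hand sum into row $r$ of the matrix product $\left[(-1)^{i+j}\binom{j}{i}\right]\beta^{(k)}$, while the right-hand side becomes exactly $\binom{dk+r-1}{r}$, the $r$-th entry of the target vector. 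This is the asserted identity.

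The only genuine obstacle here is bookkeeping rather than mathematics: one must keep straight the offset between the $0$-indexed matrix columns and the $1$-indexed Betti vector, track the several factors of $-1$, and apply upper negation correctly so that the signs on the two sides cancel to leave the clean coefficient $\binom{dk+r-1}{r}$. An equivalent route would avoid generating functions and transform \eqref{eq:hk_sys} directly: dividing the $m$-th equation by $m!$ replaces the falling factorial there by $\binom{dk+i-1}{m}$, and the Vandermonde identity $\binom{dk+i-1}{m}=\sum_{r=0}^m \binom{dk}{m-r}\binom{i-1}{r}$ re-expresses the system in the stated basis through an invertible lower-unitriangular change of coordinates; but the substitution $t=1+z$ is the more economical presentation, and I would write the argument that way.
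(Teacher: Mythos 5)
Your proof is correct, but it takes a genuinely different route from the paper. The paper starts from the Herzog--K\"uhl system \eqref{eq:hk_sys} in its falling-factorial form and then performs an explicit sequence of row operations (subtracting suitable multiples of consecutive rows, twice over, and finally rescaling row $i$ by $(-1)^{i+1}/i!$) to transform the coefficient matrix into $\left[(-1)^{i+j}\binom{j}{i}\right]$ and the right-hand side into $\left[\binom{dk+i-1}{i}\right]$. You instead bypass the derivative/falling-factorial form of the equations entirely: you encode the divisibility $(1-t)^{n-\delta} \mid f(t)$ as vanishing of the first $n-\delta$ Taylor coefficients at $t=1$, i.e.\ you work in the variable $z=t-1$, factor out the unit $(1+z)^{dk}$, and read off the congruence $g(z) \equiv -(1+z)^{-dk} \pmod{z^{n-\delta}}$; coefficient extraction plus upper negation then produces each row of the stated identity directly. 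I checked the sign and index bookkeeping: with $j=i-1$ your equation $\sum_{i=1}^n (-1)^{i+r+1}\binom{i-1}{r}\beta_i^{(k)}=\binom{dk+r-1}{r}$ is precisely row $r$ of the theorem, so the argument is complete. What each approach buys: the paper's elimination is elementary and self-contained but laborious, and the intermediate matrices are where errors and sign slips are most likely; your version is shorter and more conceptual --- it explains \emph{why} the binomial matrix and the entries $\binom{dk+i-1}{i}$ appear (they are the coefficient functionals $[z^r]$ applied to the numerator of the Hilbert series and to $-(1+z)^{-dk}$ respectively), and it makes clear that the theorem is an exact repackaging of the divisibility condition rather than a lucky consequence of row reduction. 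Your closing remark that the Vandermonde identity gives the unitriangular change of basis connecting the two formulations is also correct, and is in effect what the paper's row operations accomplish implicitly.
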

\begin{proof}
We start by writing the system of equations (\ref{eq:hk_sys}) in matrix form, as
{\small \[
  \left[\begin{matrix}
  -1        & 1              & \cdots & (-1)^n \\[1ex]
  -dk       & dk+1           & \cdots & (-1)^n(dk+n-1) \\[1ex]
  -dk(dk-1) & \!\!\!(dk\!+\!1)dk & 	& (-1)^n(dk\!+\!n\!-\!1)(dk\!+\!n\!-\!2) \\
  \vdots    &	      &	& \vdots \\
  \!-dk(dk\!-\!1)\cdots (dk\!-\!n\!+\!\delta\!+\!2)& \dots & \dots & (-1)^n(dk\!+\!n\!-\!1) \cdots (dk\!+\!\delta-1)
 \end{matrix}\right. \!\!
 \left|\begin{matrix}
       -1 \\[1ex] 0\\[1ex] 0 \\ \vdots  \\ 0 
       \end{matrix}
\right]_{\ \textstyle{,}}
\]}
where $\delta=\dim I$. Let us index the rows by $i=0, 1, \ldots, n-\delta-1$. When we subtract a $(dk-i)$-multiple of row $i$ from row $i+1$, for $i=n-\delta-2, n-\delta-3, \ldots,  1, 0$ we get 
{\small \[
 \left[\begin{matrix}
  -1    & 1		& -1             & \cdots & (-1)^n \\[1ex]
  0 	& 1		& -2          	 & \cdots & (-1)^n (n-1) \\[1ex]
  0 	& dk+1  	& -2(dk+2)	 &	  & (-1)^n (n-1)(dk+n-1) \\[1ex]
  0	& (dk+1)dk	& \!\!\!\!\!\!\!\!\!\!\!\!\!\!-2(dk\!+\!2)(dk\!+\!1)\!\!\!\!\! &     	  & \!\!(-1)^n(n\!-\!1)(dk\!+\!n\!-\!1)(dk\!+\!n\!-\!2) \\
  \vdots&	        &		 & 	  & \vdots \\
  0 & \!\!\!\!\!\!\!(dk\!+\!1)dk\cdots  (dk\!-\!n\!+\!\delta\!+\!4)& \dots & \dots & \!\!\!\!\!(-1)^n(n\!-\!1)(dk\!+\!n\!-\!1) \cdots (dk\!+\!\delta)
 \end{matrix}\right. \!\!
 \left|\begin{matrix}
       -1 \\[1ex] dk \\[1ex] 0 \\[1ex] 0  \\ \vdots \\ 0 
       \end{matrix}
\right]_{\ \textstyle{,}}
\]}
Next, we subtract a $(dk-i+2)$-multiple of row $i$ from row $i+1$, for $i=n-d-1,  \ldots, 1$, which gives
{\small
\[
 \left[\begin{matrix}
  -1    & 1	& -1             & \cdots & (-1)^n \\[1ex]
  0 	& 1	& -2          	 & \cdots & (-1)^n (n-1) \\[1ex]
  0 	& 0  	& -2	 	 &	  & (-1)^n (n-1)(n-2) \\[1ex]
  0	& 0	& -2(dk+2) 	 &     	  & \!\!(-1)^n(n\!-\!1)(n-2)(dk\!+\!n\!-\!1) \\
  \vdots& \vdots	       	  &		 & 	  & \vdots \\
  0 &  0& -2(dk+2) \cdots (dk\!-\!n\!+\!\delta\!+\!6) & \dots & \!\!\!\!\!(-1)^n(n\!-\!1)(n\!-\!2)(dk\!+\!n\!-\!1) \cdots (dk\!+\!\delta\!-\!1)
 \end{matrix}\right. \!\!
 \left|\begin{matrix}
       -1 \\[1ex] dk \\[1ex] -dk(dk+1) \\[1ex] 0  \\ \vdots \\ 0 
       \end{matrix}
\right]_{\ \textstyle{.}}
\]}
Continuing in the same fashion eventually gives
{\footnotesize
\[
\left[ \begin{matrix}
\!-1 		& 1			& -1		& 1			& \dots	& (-1)^{n-\delta}				& \dots & (-1)^n \\[1ex]
0		& 1			& -2		& 3			& \dots	& (-1)^{n-\delta}(n-\delta-1)			& \dots & (-1)^n (n-1) \\[1ex]
0		& 0			& -2		& 3 \cdot 2		& 		& (-1)^{n\!-\delta}(n\!-\!\delta\!-\!1)(n\!-\!\delta\!-\!2)\!\!\!\!	&		& \!\!\!\!(-1)^n (n\!-\!1)(n\!-\!2) \\
0		& 0			& 0		& 3 \cdot 2	        &		&	\vdots 					&		& \vdots \\

0		& 0			& 0 		& 0			&		&							&		& \\[-1ex]
\vdots	        & \vdots	        & \vdots	& \vdots	 	&	\ddots \!\!\!\!\!\!\!\!\!\!\!\!\!\!\!\!	& (-1)^{n-\delta}(n-\delta-1)!		&		& \vdots \\
0		&	0		&	0		&	0	&		& (-1)^{n-\delta}(n-\delta-1)!		&		& (-1)^n(n\!-\!1) \cdots (\delta\!+\!1) 
\end{matrix} \right| 
\left. \begin{matrix}
-1 \\[1ex] dk \\[1ex] -dk(dk+1) \\[1ex] dk(dk+1)(dk+2) \\ \vdots  \\  \\ \!(-1)^{n\!-\!d+1} \!dk \cdots \!(dk\!+\!n\!-\!\delta)\!  
\end{matrix} \right]
\]}
Notice that we now have $(-1)^{j+1}j!/(j-i)!$ in position $(i,j)$ in the coefficient matrix. Multiplying each row by $(-1)^{i+1}/i!$ completes the proof.
  \end{proof}

We can see that Theorem \ref{thm:main} is most useful for ideals of low dimension, since the number of equations is $n-\dim I$. If we can compute $m$ of the Betti numbers $\beta_i(S/I^k)$, for some $m \ge \dim I$, we have enough information to solve the system of equations, and we get explicit expressions for $\beta_i(S/I^k)$, as polynomials in $k$. For any ideal $J \subseteq S$, we know that $\beta_1(S/J)$ is always the number of minimal generators of $J$. We can also compute $\beta_n(S/J)$ in the following way. Recall that the Betti numbers satisfy $\beta_i(S/J) = \dim_\mathbb{k} \Tor_i^S(\mathbb{k},S/J)$, see e.\,g.\ \cite[Proposition 1.3.1]{bruns-herz}. Using the properties of the Tor functor we have an isomorphism of $\mathbb{k}$-spaces
\[
\Tor_n^S(\mathbb{k},S/J) = \Tor_n^S(S/(x_1, \ldots, x_n),S/J) \cong \Soc(S/J),
\]
where 
\[ \Soc(S/J) = \{ f \in S/J \ | \ fx_i=0, \ \mbox{for all} \ i=1,  \ldots, n\} \ \]
is the \emph{socle} of $S/J$. It follows that $\beta_n(S/J) = \dim_\mathbb{k} \Soc(S/J)$.

\begin{theorem}\label{thm:main_dim1-2}
 Let $S=\mathbb{k}[x_1, \ldots, x_n]$, and let $I \subset S$ be a homogeneous ideal with linear powers, generated in degree $d$. Let $\beta_i^{(k)}=\beta_i(S/I^k)$. If $\dim I =1$, then
 \[
  \begin{bmatrix}
   \beta_2^{(k)} \\ \vdots \\ \vdots \\ \beta_n^{(k)} 
  \end{bmatrix}= 
\left[ \binom{i}{j} \right]_{\substack{ 0 \le i \le n-2 \\ \substack 0 \le j \le n-2}} \left[\beta_1^{(k)}- \binom{dk+i}{i} \right]_{0 \le i \le n-2 {\ \textstyle{.}}}
 \]
If $\dim I =2$, then 
 \[
  \begin{bmatrix}
   \beta_3^{(k)} \\ \vdots \\ \vdots \\ \beta_n^{(k)} 
  \end{bmatrix}= 
\left[ \binom{i}{j} \right]_{\substack{ 0 \le i \le n-3 \\ \substack 0 \le j \le n-3}} \left[\beta_1^{(k)} -(-1)^{n+i}\binom{n-2}{i}\beta_n^{(k)}   - \binom{dk+i}{i} \right]_{0 \le i \le n-3 {\ \textstyle{.}}}
 \]
\end{theorem}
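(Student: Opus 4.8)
The plan is to read the linear system straight off Theorem~\ref{thm:main} and solve it for the unknown Betti numbers, using that the two ``outer'' numbers are computable on their own: $\beta_1^{(k)}$ is the number of minimal generators of $I^k$, and, by the socle computation preceding the statement, $\beta_n^{(k)}=\dim_{\mathbb{k}}\Soc(S/I^k)$. When $\dim I=1$ the system of Theorem~\ref{thm:main} has $n-1$ rows; treating $\beta_1^{(k)}$ as known and moving its (first) column to the right-hand side leaves a square $(n-1)\times(n-1)$ system in $\beta_2^{(k)},\dots,\beta_n^{(k)}$. When $\dim I=2$ there are only $n-2$ rows, so I would in addition regard $\beta_n^{(k)}$ as known and move both the first and the last column across; this is precisely why the $\dim I=2$ right-hand vector carries the extra term $(-1)^{n+i}\binom{n-2}{i}\beta_n^{(k)}$. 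In either case the remaining coefficient matrix is the signed binomial matrix $\bigl[(-1)^{i+j}\binom{j}{i}\bigr]$ (suitably truncated), and the claim amounts to saying that its inverse is the unsigned Pascal matrix displayed in the statement.

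The algebraic heart of the argument is the orthogonality relation
\[
\sum_{j}(-1)^{i+j}\binom{j}{i}\binom{p}{j}=\delta_{i,p}\qquad(\text{Kronecker delta}),
\]
which says exactly that $\bigl[(-1)^{i+j}\binom{j}{i}\bigr]$ and $\bigl[\binom{j}{i}\bigr]$ are mutually inverse; it follows in one line from $\binom{j}{i}\binom{p}{j}=\binom{p}{i}\binom{p-i}{j-i}$ together with $\sum_{\ell}(-1)^{\ell}\binom{m}{\ell}=0$ for $m>0$. Rather than invert the truncated matrix directly, I would substitute the proposed formula back into the equations of Theorem~\ref{thm:main}. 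Writing $v_p$ for the $p$-th entry of the stated right-hand vector, the cross terms collapse by the telescoping identity
\[
\sum_{j\ge 1}(-1)^{i+j}\binom{j}{i}\binom{p}{j-1}=\delta_{p,i-1}-\delta_{p,i},
\]
obtained from the orthogonality relation after the shift $j\mapsto j+1$ and one use of Pascal's rule $\binom{j+1}{i}=\binom{j}{i}+\binom{j}{i-1}$. Hence the inner summation produces the difference $v_{i-1}-v_i$, and a final application of Pascal's rule to the binomials $\binom{dk+i}{i}$ and $\binom{n-1}{i}$ turns this difference into exactly $\binom{dk+i-1}{i}$, the right-hand side of the $i$-th Herzog--K\"uhl equation.

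Uniqueness is what upgrades this verification to an identification with the true Betti numbers, and it is clean: since $\bigl[\binom{j}{i}\bigr]$ inverts the signed matrix, the kernel of the Herzog--K\"uhl coefficient matrix is spanned by the last $\dim I$ columns of the Pascal matrix, namely $\bigl(\binom{n-1}{j}\bigr)_j$ for $\dim I=1$ and additionally $\bigl(\binom{n-2}{j}\bigr)_j$ for $\dim I=2$; the functionals $v\mapsto v_{1}$ (and $v\mapsto v_{n}$) are linearly independent on this kernel, so fixing $\beta_1^{(k)}$ (and $\beta_n^{(k)}$) pins the solution down. The step I expect to require the most care is the boundary bookkeeping: the index $i=0$ must be handled with the convention $v_{-1}=0$, and in the $\dim I=2$ case one has to check that the contribution $(-1)^{i+n-1}\binom{n-1}{i}\beta_n^{(k)}$ of the moved last column cancels the $\beta_n^{(k)}$-part of $v_{i-1}-v_i$ for every $i$ — a cancellation that again rests on Pascal's rule, here in the form $\binom{n-2}{i-1}+\binom{n-2}{i}=\binom{n-1}{i}$.
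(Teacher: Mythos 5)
Your proposal is correct, and it reaches the result by a genuinely different route than the paper, so a comparison is worth recording. The paper solves the system of Theorem \ref{thm:main} by elimination: after moving $\beta_1^{(k)}$ (and, when $\dim I=2$, the $\beta_n^{(k)}$-column) to the right-hand side, it adds each row to the next, using Pascal's rule $\binom{j+1}{i}-\binom{j}{i-1}=\binom{j}{i}$, until the coefficient matrix becomes the square signed Pascal matrix $\bigl[(-1)^{i+j}\binom{j}{i}\bigr]$ with right-hand entries $\beta_1^{(k)}-\binom{dk+i}{i}$ (plus the $\beta_n^{(k)}$-terms in the two-dimensional case), and then quotes the inverse $\bigl[\binom{j}{i}\bigr]$ from its reference on Pascal matrices; since row operations preserve the solution set and the final matrix is invertible, no separate uniqueness argument is needed. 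You instead substitute the claimed formula into the original equations and verify them --- your telescoping identity $\sum_{j\ge1}(-1)^{i+j}\binom{j}{i}\binom{p}{j-1}=\delta_{p,i-1}-\delta_{p,i}$ encodes exactly the Pascal-rule algebra that the paper's row operations carry out, and your cancellation of the $\beta_n^{(k)}$-terms via $\binom{n-2}{i-1}+\binom{n-2}{i}=\binom{n-1}{i}$ is right --- and you then supply the uniqueness step the paper never needs: the kernel of the truncated Herzog--K\"uhl matrix is spanned by the Pascal columns $\bigl(\binom{n-1}{j}\bigr)_j$ and, for $\dim I=2$, also $\bigl(\binom{n-2}{j}\bigr)_j$, on which the coordinates $\beta_1$ and $\beta_n$ restrict to independent functionals. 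Your version buys self-containedness (you prove the matrix inversion rather than cite it) and a conceptual explanation of why exactly $\dim I$ additional Betti numbers must be supplied; the paper's version is shorter once the Pascal inverse is taken as known. Two small remarks. First, your preliminary claim that the column-truncated coefficient matrix is the signed binomial matrix whose inverse is the Pascal matrix is not literally correct: deleting the $\beta_1$-column leaves a matrix with $(i,j)$-entry $(-1)^{i+j+1}\binom{j+1}{i}$, and repairing this discrepancy is precisely the job of the paper's row reduction (equivalently, of your telescoping identity); since your actual argument runs through the substitution check, nothing is lost. Second, you have tacitly worked with the correct reading of the statement --- the matrix $\bigl[\binom{j}{i}\bigr]$ rather than $\bigl[\binom{i}{j}\bigr]$, and unknowns $\beta_2^{(k)},\dots,\beta_{n-1}^{(k)}$ in the $\dim I=2$ case --- which is the form used in the paper's own proof, corollaries and examples.
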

\begin{proof}
 Subtracting $\beta_1^{(k)}$ from both sides of the first equation in Theorem \ref{thm:main} results in the system 
 \[
\begin{array}{l}
\begin{matrix}
\ \ \ \beta_2^{(k)} & \  \beta_3^{(k)} & \   \beta_4^{(k)} & \  \beta_5^{(k)} & \ \dots & \ \  \beta_n^{(k)}
\end{matrix} \\
\left[ \begin{matrix}
-1 & 1 & -1 & 1 & \dots & (-1)^{n-1} \\[1ex]
1 & - \binom{2}{1} & \binom{3}{1} & -\binom{4}{1} & \dots & (-1)^n \binom{n}{1}  \\[1ex] 
0 & 1 & -\binom{3}{2} & \binom{4}{2} & \dots & (-1)^{n+1}\binom{n}{2} \\[1ex]
0 & 0 & 1 & - \binom{4}{3} & & \vdots \\
\vdots & \vdots & \vdots & \vdots & & \\
0 & 0 & & & 
\end{matrix} \right.
\left| \begin{matrix}
1-\beta_1^{(k)} \\[1ex] dk \\[1ex] \binom{dk+1}{2} \\[1ex] \binom{dk+2}{3} \\ \vdots \\ \binom{dk+n-\delta}{n-\delta-1} \\
\end{matrix}\right]_{\ \textstyle{.}}
\end{array}
\]
Adding the first row to the second, then the second row  to the third, and so on, gives
 \[
\left[ \begin{matrix}
-1 & 1 & -1 & 1 & \dots & (-1)^{n-1} \\[1ex]
0 & - 1 & \binom{2}{1} & -\binom{3}{1} & \dots & (-1)^n \binom{n-1}{1}  \\[1ex] 
0 & 0 & -1 & \binom{3}{2} & \dots & (-1)^{n+1}\binom{n-1}{2} \\[1ex]
0 & 0 & 0 & - 1 & & \vdots \\
\vdots & \vdots & \vdots & \vdots & & \\
0 & 0 & & & 
\end{matrix} \right.
\left| \begin{matrix}
1-\beta_1^{(k)} \\[1ex] dk+1-\beta_1^{(k)} \\[1ex] \binom{dk+2}{2}-\beta_1^{(k)} \\[1ex] \binom{dk+3}{3}-\beta_1^{(k)} \\ \vdots \\ \binom{dk+n-\delta-1}{n-\delta-1}-\beta_1^{(k)} \\
\end{matrix}\right]_{\ \textstyle{,}}
\]
where the relation $\binom{j+1}{i}-\binom{j}{i-1}=\binom{j}{i}$ has been used. Multiplying each row by $-1$ results in the system 
\[
 \left[(-1)^{i+j} \binom{j}{i}\right]_{\substack{ 0 \le i <n-\dim I \\ 0 \le j \le n-2}} \left[ \beta_i^{(k)} \right]_{2 \le i \le n} =  \left[\beta_1^{(k)}- \binom{dk+i}{i} \right]_{0 \le i < n- \dim I {\ \textstyle{.}}}
\]
The inverse of the square matrix $\big[ (-1)^{i+j}\binom{j}{i} \big]$ with $0 \le i, j \le s$ is $\big[ \binom{j}{i} \big]$. For a proof of this fact, see e. g. \cite{pascal}. This proves the case $\dim I =1$. Moving all multiples of $\beta_n^{(k)}$ to the right hand side of the above equation gives
\begin{align*}
 \left[(-1)^{i+j} \binom{j}{i}\right]_{\substack{ 0 \le i <n-\dim I \\ 0 \le j \le n-3}} & \left[ \beta_i^{(k)}  \right]_{2 \le i \le n-1} \! \\
 &=  \left[\beta_1^{(k)} \!-(-1)^{n+i}\binom{n-2}{i}\beta_n^{(k)}  - \binom{dk+i}{i} \right]_{0 \le i < n- \dim I {\ \textstyle{.}}}
\end{align*}
If $\dim I =2$, we now have a square coefficient matrix, and multiplying by its inverse completes the proof. 
  \end{proof}

\subsection{Application to square-free monomial ideals}\label{sec:sqfree}
Let $I$ be the ideal in $S=\mathbb{k}[x_1, \ldots, x_n]$, generated by all square-free monomials of degree $d$, for some fixed integer $d$, and $\beta_i^{(k)}=\beta_i(S/I^k)$. Such ideals are polymatroidal, and therefore have linear powers. We have $\dim I =d-1$. The $k$-th power $I^k$ is generated by all monomials $x_1^{a_1} \cdots x_n^{a_n}$ of degree $dk$, such that $a_i\le k$, and the number of such monomials can be computed using the principle of inclusion-exclusion. A monomial $x_1^{a_1} \cdots x_n^{a_n}$ of degree $dk$ with $a_{i_1}, \ldots, a_{i_m}>k$, for some indices $i_1, \ldots, i_m$, can be considered as a monomial of degree $dk-m(k+1)$, multiplied by $(x_{i_1}\cdots x_{i_m})^{k+1}$. It follows that the number of monomials $x_1^{a_1} \cdots x_n^{a_n}$ with $a_i>k$, for at least $m$ of the $a_i$'s is
\[ \binom{n}{m} \binom{n+dk-m(k+1)-1}{n-1}_{\ \textstyle{,}}\]
and hence
\begin{equation}\label{eq:beta1_square-free}
\beta_1^{(k)}=\sum_{\substack{m \ge 0 \\ m(k+1) \le dk}}\!\!(-1)^m \binom{n}{m} \binom{n+dk-m(k+1)-1}{n-1}_{\ \textstyle{.}}
\end{equation}
To compute $\Soc(S/I^k)$ it is sufficient to determine all monomials $f$ such that $f \notin I^k$ but $fx_i \in I^k$, for all $i$. Such a monomial $f$ must be of degree at least $dk-1$. Suppose $f=x_1^{b_1} \cdots x_n^{b_n}$ is a monomial of degree $dk$ or higher, with this property. We can factorize $f$ as 
\[
 f=gx_1^{a_1} \cdots x_n^{a_n} \ \mbox{where} \ g= \prod_{b_i \ge k}x_i^{b_i-k} \ \mbox{and} \ a_i=\begin{cases}
                                                                                                      b_i & \mbox{for} \ b_i\le k, \\
                                                                                                      k & \mbox{otherwise.}
                                                                                                     \end{cases}
\]
Since $f \notin I^k$ we have $\sum a_i <dk$. Then, for any $i$ such that $x_i | g$, we have $fx_i \notin I^k$. Hence, all elements of $\Soc(S/I^k)$ are of degree $dk-1$. More precisely 
\[
 \Soc(S/I^k) = \spn \{ x_1^{a_1} \cdots x_n^{a_n} \ | \ a_i<k, \sum_{i=1}^n a_i = dk-1 \},
\]
and we get
\begin{align}\label{eq:betan_square-free}
 \beta_n^{(k)} = \dim_{\mathbb{k}}\Soc(S/I^k)  &= \sum_{\substack{m \ge 0 \\ mk \le dk-1}} \!\!(-1)^m\binom{n}{m} \binom{n+dk-1-mk-1}{n-1} \\
 &= \sum_{m=0}^{d-1}(-1)^m \binom{n}{m} \binom{n+(d-m)k-2}{n-1}_{\ \textstyle{.}}
\end{align}

If $d=2,$ or $3$, we can now use Theorem \ref{thm:main_dim1-2} to compute the remaining Betti numbers.

\begin{corollary}\label{cor:sqfree_d2}
 Let $S=\mathbb{k}[x_1, \ldots, x_n]$, and let $I \subseteq S$ be the ideal generated by all square-free monomials of degree two. Let $\beta_i^{(k)}=\beta_i(S/I^k)$. Then
 \[ \beta_1^{(k)}= \binom{n+2k-1}{n-1} -n\binom{n+k-2}{n-1}_{\ \textstyle{,}} \]
 and
 \[ \begin{bmatrix}
\beta_2^{(k)} \\ \vdots \\ \vdots \\ \beta_n^{(k)}
\end{bmatrix} = \left[ \binom{j}{i} \right]_{\substack{0 \le i \le n-2 \\ 0 \le j \le n-2}} \left[\beta_1^{(k)}- \binom{2k+i}{i} \right]_{0 \le i \le n-2 {\ \textstyle{.}}}\]
\end{corollary}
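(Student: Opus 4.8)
The plan is to obtain this corollary as a direct specialization of the machinery already set up in this subsection, taking $d=2$. Recall that the ideal $I$ generated by all square-free monomials of degree two is polymatroidal, hence has linear powers, and that $\dim I = d-1 = 1$. Thus the first case of Theorem \ref{thm:main_dim1-2} applies verbatim, and the only genuine computation needed is a closed form for $\beta_1^{(k)}$, after which the matrix identity follows immediately by substitution.

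First I would specialize the inclusion–exclusion formula (\ref{eq:beta1_square-free}) to $d=2$. The summation index $m$ there ranges over nonnegative integers with $m(k+1) \le dk = 2k$. For $k \ge 1$ this inequality is satisfied by $m=0$ and $m=1$ but fails for $m=2$, since $2(k+1) = 2k+2 > 2k$; hence every term with $m \ge 2$ vanishes and the sum truncates after two terms. The $m=0$ term contributes $\binom{n+2k-1}{n-1}$, and the $m=1$ term contributes $-\binom{n}{1}\binom{n+2k-(k+1)-1}{n-1} = -n\binom{n+k-2}{n-1}$, which together yield the stated expression for $\beta_1^{(k)}$.

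Having computed $\beta_1^{(k)}$, I would substitute $d=2$ into the $\dim I = 1$ case of Theorem \ref{thm:main_dim1-2}. Since $dk = 2k$, the entries $\binom{dk+i}{i}$ of the right-hand vector become $\binom{2k+i}{i}$, while the square matrix $\big[\binom{j}{i}\big]_{0 \le i,j \le n-2}$ is unaffected. This produces exactly the claimed identity for the column vector $(\beta_2^{(k)}, \ldots, \beta_n^{(k)})$, completing the proof.

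The argument is essentially a substitution, so there is no serious obstacle. The one point requiring care is the truncation of the inclusion–exclusion sum: one must verify that the constraint $m(k+1) \le 2k$ genuinely eliminates all terms beyond $m=1$ for \emph{every} $k \ge 1$, so that the resulting formula for $\beta_1^{(k)}$ is an honest closed form valid for all positive $k$, rather than an expression valid only asymptotically.
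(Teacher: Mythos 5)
Your proposal is correct and matches the paper's proof exactly: substitute $d=2$ into (\ref{eq:beta1_square-free}) to get $\beta_1^{(k)}$, then apply the $\dim I = 1$ case of Theorem \ref{thm:main_dim1-2}. Your additional check that the inclusion--exclusion sum truncates after $m=1$ (since $2(k+1)>2k$ for all $k\ge 1$) is a worthwhile detail the paper leaves implicit.
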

\begin{proof}
 The expression for $\beta_1^{(k)}$ follows from substituting $d=2$ in (\ref{eq:beta1_square-free}). Since $\dim I =1$, Theorem \ref{thm:main_dim1-2} can be used to compute the remaining Betti numbers. 
  \end{proof}

\begin{exampl}\label{ex:sqfree_betti_deg2}
 Let $S=\mathbb{k}[x_1, x_2, x_3, x_4]$, and let $I \subseteq S$ be the ideal generated by all square-free monomials of degree two. By Corollary \ref{cor:sqfree_d2}, the Betti numbers of $S/I^k$ can now be expressed as the following polynomials in $k$ 
 \[
  \beta_1^{(k)} = \binom{3+2k}{3} - 4 \binom{k+2}{3} = \frac{1}{3}(2k^3+6k^2+7k+3),
 \]
and
\begin{align*}
 \begin{bmatrix}
  \beta_2^{(k)} \\ 
  \beta_3^{(k)} \\
  \beta_4^{(k)} \\
 \end{bmatrix}
&= \begin{bmatrix}
   1 & 1 & 1 \\
   0 & 1 & 2 \\
   0 & 0 & 1
  \end{bmatrix}
\begin{bmatrix}
 \beta_1^{(k)} - 1 \\
 \beta_1^{(k)} - (2k+1) \\
 \beta_1^{(k)} - (2k+1)(k+1) \\
\end{bmatrix} \\
&= \frac{1}{3} \begin{bmatrix}
   1 & 1 & 1 \\
   0 & 1 & 2 \\
   0 & 0 & 1
  \end{bmatrix}
\begin{bmatrix}
 2k^3+6k^2+7k \\
 2k^3+6k^2+k \\
 2k^3-2k \\
\end{bmatrix} = 
 \begin{bmatrix}
  2k^3+4k^2+k \\
  2k^3+2k^2-k \\
  \frac{2}{3}(k^3-k)
 \end{bmatrix}_{\ \textstyle{.}}
 \end{align*}
\end{exampl}

\begin{corollary}\label{cor:sqfree_d3}
 Let $S=\mathbb{k}[x_1, \ldots, x_n]$, and let $I \subseteq S$ be the ideal generated by all square-free monomials of degree three. Let $\beta_i^{(k)}=\beta_i(S/I^k)$. Then
 \[ \beta_1^{(k)} = \binom{n+3k-1}{n-1} -n\binom{n+2k-2}{n-1}+ \binom{n}{2}\binom{n+k-3}{n-1}_{\ \textstyle{,}} \]
\[ \beta_n^{(k)} = \binom{n+3k-2}{n-1} -n\binom{n+2k-2}{n-1}+ \binom{n}{2}\binom{n+k-2}{n-1} \]
and 
\[
 \begin{bmatrix}
\beta_2^{(k)} \\ \vdots \\ \vdots \\ \beta_{n-1}^{(k)}
\end{bmatrix} = \left[ \binom{j}{i} \right]_{\substack{0 \le i \le n-3 \\ 0 \le j \le n-3}} \left[ \beta_1^{(k)} - (-1)^{n+i}\beta_n^{(k)} \binom{n-2}{i}-\binom{3k+i}{i} \right]_{0 \le i \le n-3 {\ \textstyle{.}}}
\]
 \end{corollary}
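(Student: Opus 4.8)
The plan is to obtain all three assertions by specializing the general machinery already developed to the case $d=3$, $\dim I = 2$. I would begin by recording the two facts that make this possible: the ideal $I$ generated by all square-free monomials of degree three is polymatroidal, hence has linear powers, and $\dim I = d-1 = 2$, so that the $\dim I = 2$ case of Theorem \ref{thm:main_dim1-2} is available. The entire argument then reduces to three substitutions and an invocation of that theorem.

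For $\beta_1^{(k)}$ I would substitute $d=3$ directly into equation (\ref{eq:beta1_square-free}). The summation condition $m(k+1) \le 3k$ leaves only the terms $m=0,1,2$, producing the three displayed summands $\binom{n+3k-1}{n-1}$, $-n\binom{n+2k-2}{n-1}$, and $\binom{n}{2}\binom{n+k-3}{n-1}$. The one point demanding care is to confirm that the convention $\binom{j}{i}=0$ for $i>j$ correctly kills the $m=2$ term in the degenerate range; for instance at $k=1$ one has $\binom{n+k-3}{n-1}=\binom{n-2}{n-1}=0$, so the closed form is a genuine polynomial identity in $k$ valid for every $k \ge 1$, not merely for $k$ large. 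Analogously, for $\beta_n^{(k)} = \dim_{\mathbb{k}}\Soc(S/I^k)$ I would substitute $d=3$ into the second line of (\ref{eq:betan_square-free}), where the index runs over $0 \le m \le d-1 = 2$ (since $mk \le 3k-1$ forces $m \le 2$ for all $k$); this yields the three terms $\binom{n+3k-2}{n-1}$, $-n\binom{n+2k-2}{n-1}$, and $\binom{n}{2}\binom{n+k-2}{n-1}$ of the stated expression.

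With the two boundary Betti numbers in hand, I would recover the intermediate numbers $\beta_2^{(k)}, \ldots, \beta_{n-1}^{(k)}$ by applying the $\dim I = 2$ case of Theorem \ref{thm:main_dim1-2} with $d=3$, so that $\binom{dk+i}{i}$ becomes $\binom{3k+i}{i}$ and the already-computed quantities $\beta_1^{(k)}$ and $\beta_n^{(k)}$ are fed into the right-hand side. Since $n-\dim I = n-2$ Herzog--K\"uhl equations are available and the two extreme Betti numbers are determined independently, the relevant coefficient matrix is square and invertible, its inverse being exactly the Pascal-type matrix $\big[\binom{j}{i}\big]$ identified in the proof of Theorem \ref{thm:main_dim1-2}; hence the remaining $\beta_i^{(k)}$ are pinned down uniquely as polynomials in $k$.

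I do not anticipate a genuine obstacle, as the statement is a direct specialization of earlier results. The only delicate step is the bookkeeping in the previous paragraph: verifying the exact range of the summation index $m$ in the formulas for $\beta_1^{(k)}$ and $\beta_n^{(k)}$ and checking that the binomial-coefficient convention handles the small-$k$ boundary, so that the displayed expressions are valid polynomial identities in $k$ for all $k \ge 1$ rather than only asymptotically.
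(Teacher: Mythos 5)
Your proposal is correct and follows exactly the paper's own proof: substitute $d=3$ into (\ref{eq:beta1_square-free}) and (\ref{eq:betan_square-free}) to get $\beta_1^{(k)}$ and $\beta_n^{(k)}$, then apply the $\dim I = 2$ case of Theorem \ref{thm:main_dim1-2}. Your extra care about the $m=2$ term at $k=1$ (where the convention $\binom{n-2}{n-1}=0$ keeps the closed form valid) is a sound refinement of a detail the paper leaves implicit.
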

\begin{proof}
 The expressions for $\beta_1^{(k)}$ and $\beta_n^{(k)}$ is obtained by substituting $d=3$ in (\ref{eq:beta1_square-free}) and (\ref{eq:betan_square-free}). Since $\dim I =2$, Theorem \ref{thm:main_dim1-2} can be used to compute the remaining Betti numbers. 
  \end{proof}

\begin{exampl}
 Let $S=\mathbb{k}[x_1, \ldots, x_5]$, and let $I \subseteq S$ be the ideal generated by all square-free monomials of degree three. By Corollary \ref{cor:sqfree_d3}, the Betti numbers of $S/I^k$ can now be expressed as the following polynomials in $k$ 
 \begin{align*}
  \beta_1^{(k)} &= \binom{4+3k}{4} -5 \binom{3+2k}{4}+ \binom{5}{2}\binom{2+k}{4} \\
  &= \frac{1}{24}(11k^4+50k^3+85k^2+70k+24),
    \\
  \beta_5^{(k)} &= \binom{3+3k}{4} -5 \binom{3+2k}{4}+ \binom{5}{2}\binom{3+k}{4}\\
  &= \frac{1}{24}(11k^4-18k^3-11k^2+18k), 
   \end{align*}
and
\begin{align*}
 \begin{bmatrix}
  \beta_2^{(k)} \\ 
  \beta_3^{(k)} \\
  \beta_4^{(k)} \\
 \end{bmatrix}
&= \begin{bmatrix}
   1 & 1 & 1 \\
   0 & 1 & 2 \\
   0 & 0 & 1
  \end{bmatrix}
\begin{bmatrix}
 \beta_1^{(k)} + \beta_5^{(k)} - 1 \\
 \beta_1^{(k)} -3\beta_5^{(k)}  - (3k+1) \\
 \beta_1^{(k)} + 3\beta_5^{(k)}  - \frac{(3k+2)(3k+1)}{2} \\
\end{bmatrix} 
%\\[1ex]
%&
=
\begin{bmatrix}
\frac{11}{6}k^4+\frac{11}{2}k^3+\frac{17}{3}k^2+2k  \\[0.7ex]
 \frac{11}{4}k^4+4k^3+\frac{1}{4}k^2-k \\[0.7ex]
 \frac{11}{6}k^4-\frac{1}{6}k^3-\frac{7}{3}k^2+\frac{2}{3}k
\end{bmatrix}_{\ \textstyle{.}}
% =
% \begin{bmatrix}
% \frac{1}{6} k (k + 1) (11 k^2 + 22 k + 12)  \\
%  \frac{1}{4} k (k + 1) (11 k^2 + 5 k - 4) \\
% \frac{1}{6} k (k - 1) (11 k^2 + 10 k - 4)
% \end{bmatrix}
\end{align*}

\end{exampl}

\subsection{Application to the product of ideals generated by variables}\label{sec:subsets}
In this section let $I$ be the product of all ideals generated by $s$-sized subsets of the variables, i. e. 
\[
 I = \prod_{1 \le i_1 < \dots < i_s \le n} (x_{i_1}, \ldots, x_{i_s}),
\]
for some $s \le n$. Ideals generated by variables are polymatroidal. Since products of polymatroidal ideals are polymatroidal, $I$ is polymatroidal and hence has linear powers. The generators are monomials of degree $\binom{n}{s}$, and $\dim I = n-s$. As in the previous section, we will consider the two cases $\dim I=2$ and $3$, which here means $s=n-1$ and $n-2$. 

Let us start with the case $s=n-1$. Then $I$ is the product of all ideals $(x_{i_1}, \ldots, x_{i_{n-1}})$, and it can easily be seen that $I$ is generated by all monomials of degree $n$, except the pure powers $x_1^n, x_2^n, \ldots, x_n^n$. It follows that $I^k$ is generated by all monomials $x_1^{a_1} \cdots x_n^{a_n}$ of degree $nk$ with $a_i \le k(n-1)$ for all $i$. With this information we can now compute $\beta_1(S/I^k)$ and then apply Theorem \ref{thm:main_dim1-2} to get the other Betti numbers.

\begin{corollary}
Let $S =\mathbb{k}[x_1, \ldots, x_n]$,
\[
 I= \prod_{1 \le i_1 < \dots < i_{n-1} \le n} (x_{i_1}, \ldots, x_{i_{n-1}}) \subset S ,
\]
and let $\beta_i^{(k)}=\beta_i(S/I^k)$. Then 
\[
 \beta_1^{(k)} = \binom{n(k+1)-1}{n-1}-n \binom{n+k-2}{n-1}_{\ \textstyle{,}}
\]
and
\[ \begin{bmatrix}
\beta_2^{(k)} \\ \vdots \\ \vdots \\ \beta_n^{(k)}
\end{bmatrix} = \left[ \binom{j}{i} \right]_{\substack{0 \le i \le n-2 \\ 0 \le j \le n-2}} \left[\beta_1^{(k)}- \binom{2k+i}{i} \right]_{0 \le i \le n-2 {\ \textstyle{.}}}
\]
\end{corollary}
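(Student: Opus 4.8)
The plan is to obtain $\beta_1^{(k)}$ by counting the minimal generators of $I^k$ and then to feed this value into Theorem~\ref{thm:main_dim1-2}. Recall that $\beta_1(S/I^k)$ equals the number of minimal monomial generators of $I^k$. By the description preceding the statement, $I^k$ is generated by the monomials $x_1^{a_1}\cdots x_n^{a_n}$ of degree $nk$ satisfying $a_i \le k(n-1)$ for all $i$; since these all share the degree $nk$, none divides another, so they constitute the \emph{minimal} generating set and $\beta_1^{(k)}$ is simply their number.

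To count them I would use inclusion--exclusion. There are $\binom{nk+n-1}{n-1} = \binom{n(k+1)-1}{n-1}$ monomials of degree $nk$ in total, and I subtract those that violate some bound. A violation at the $i$-th variable, meaning $a_i \ge k(n-1)+1$, corresponds under the substitution $a_i \mapsto a_i - \big(k(n-1)+1\big)$ to an arbitrary monomial of degree $nk-\big(k(n-1)+1\big) = k-1$, of which there are $\binom{n+k-2}{n-1}$; and there are $n$ choices for the violated variable. The crucial observation, which makes the alternating sum terminate after these first-order terms, is that two bounds can never be violated at once: if $a_i, a_j \ge k(n-1)+1$ then the degree would be at least $2\big(k(n-1)+1\big) = 2nk-2k+2 > nk$ for every $n \ge 2$ and $k \ge 1$. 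Hence all higher inclusion--exclusion corrections vanish, and $\beta_1^{(k)} = \binom{n(k+1)-1}{n-1} - n\binom{n+k-2}{n-1}$, as claimed.

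Finally, since $\dim I = n-(n-1) = 1$ and the generators have degree $d = \binom{n}{n-1} = n$, the dimension-one case of Theorem~\ref{thm:main_dim1-2} applies, and substituting the value of $\beta_1^{(k)}$ just computed produces the remaining Betti numbers $\beta_2^{(k)}, \ldots, \beta_n^{(k)}$. I do not anticipate a serious obstacle: the whole argument reduces to the generator count, and the only point demanding genuine care is verifying that the second-order inclusion--exclusion terms vanish uniformly in $n$ and $k$ --- including the boundary value $n=2$, where the estimate $2nk-2k+2 > nk$ still holds --- so that the formula collapses to exactly the two binomial terms above.
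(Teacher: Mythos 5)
Your proposal is correct and takes essentially the same approach as the paper: count the monomials of degree $nk$ and subtract those with some exponent exceeding $k(n-1)$, observing that at most one exponent can violate the bound (your explicit verification $2(k(n-1)+1) > nk$, including $n=2$, is the same fact the paper invokes), then apply Theorem~\ref{thm:main_dim1-2} with $\dim I = 1$. One remark: since the generating degree here is $d = n$, the theorem actually yields $\binom{nk+i}{i}$ in place of the $\binom{2k+i}{i}$ printed in the corollary (an apparent typo carried over from Corollary~\ref{cor:sqfree_d2}), and your application, like the paper's, is consistent with the corrected form.
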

\begin{proof}
The number of monomials of degree $nk$ is $\binom{n+nk-1}{n-1}$. The monomials of degree $nk$ that have $a_i>k(n-1)$, for some $i$, can be written as a monomial of degree $k-1$ multiplied by $x_i^{k(n-1)+1}$, and the number of such monomials is $n \binom{n+k-2}{n-1}$. Since only one variable can have degree greater than $k(n-1)$, without making the total degree too high, we can now conclude that
\[
 \beta_1^{(k)} = \binom{n+nk-1}{n-1}-n \binom{n+k-2}{n-1}_{\ \textstyle{.}}
\]
Since $\dim I = 1$, the second part follows from Theorem \ref{thm:main_dim1-2}.
  \end{proof}

Let us now assume $ n\ge 3$, and consider the case $s=n-2$. 
\begin{lemma}
 The ideal
 \[
  \prod_{1 \le i_1 < \dots < i_{n-2} \le n} (x_{i_1}, \ldots, x_{i_{n-2}})
\]
is generated by all monomials $x_1^{a_1} \cdots x_n^{a_n}$ of degree $\binom{n}{2}$ such that $a_i \le \binom{n-1}{2}$, for $i=1, \ldots, n$, where at least three $a_i$'s are positive. 
\end{lemma}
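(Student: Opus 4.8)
The plan is to translate membership in $I$ into a combinatorial assignment problem and then invoke Hall's theorem. First I would record the structure of the generators: there are $\binom{n}{n-2}=\binom{n}{2}$ factors, and a generator of the product is obtained by choosing one variable from each factor, so every generator is a monomial of degree $\binom{n}{2}$. It is convenient to index the factors by their complementary $2$-subsets, i.e.\ by the edges $e=\{j,k\}$ of the complete graph on $\{1,\dots,n\}$; the factor attached to $e$ is generated by the variables $x_i$ with $i\notin e$. Thus a product generator corresponds to a choice, for every edge $e$, of a vertex $\sigma(e)\notin e$, the associated monomial being $\prod_e x_{\sigma(e)}$. Since $x_i$ is available in precisely the $\binom{n-1}{2}$ factors indexed by edges avoiding $i$, every generator satisfies $a_i\le\binom{n-1}{2}$. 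Moreover, because $I$ is a product of $\binom{n}{2}$ ideals generated in degree one, it is generated in degree $\binom{n}{2}$, so its minimal monomial generators are exactly the degree-$\binom{n}{2}$ monomials lying in $I$; and such a monomial lies in $I$ if and only if it equals one of the product generators.

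With this dictionary, a monomial $x_1^{a_1}\cdots x_n^{a_n}$ of degree $\binom{n}{2}$ lies in $I$ if and only if there is a map $\sigma$ from the edges of the complete graph to the vertices with $\sigma(e)\notin e$ and $|\sigma^{-1}(i)|=a_i$ for all $i$. I would phrase the existence of $\sigma$ as a bipartite matching problem: split each vertex $i$ into $a_i$ copies, join an edge $e$ to every copy of a vertex $i\notin e$, and ask for a perfect matching saturating all $\binom{n}{2}$ edge-nodes (note $\sum_i a_i=\binom{n}{2}$ equals the number of vertex-copies). Hall's marriage theorem then reduces the question to checking, for every set $T$ of vertices, that the number of edges $e$ all of whose admissible targets lie in $T$ is at most $\sum_{i\in T}a_i$.

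The heart of the argument is the analysis of this condition, which I would organize by the size of $U=\{1,\dots,n\}\setminus T$. An edge $e$ is \emph{trapped} in $T$ exactly when $U\subseteq e$; since $|e|=2$ there are no trapped edges once $|U|\ge 3$, while $|U|=0$ gives the trivial identity $\binom{n}{2}\le\binom{n}{2}$. The two informative cases are $|U|=1$ and $|U|=2$. For $U=\{j\}$ the trapped edges are the $n-1$ edges through $j$, and the condition becomes $n-1\le\binom{n}{2}-a_j$, i.e.\ $a_j\le\binom{n-1}{2}$. For $U=\{j,k\}$ the only trapped edge is $e=\{j,k\}$, and the condition becomes $\sum_{i\ne j,k}a_i\ge 1$. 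The first family of inequalities is exactly the stated bound $a_i\le\binom{n-1}{2}$; the second says that for every pair of variables some third variable has positive exponent, which, using $\sum_i a_i=\binom{n}{2}>0$, is equivalent to requiring at least three of the $a_i$ to be positive. Combining this with the reduction of the first paragraph yields the claim.

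The one inclusion that needs the full strength of the argument is showing that every monomial satisfying the two conditions actually lies in $I$; the reverse implication is immediate from the availability count and from the observation that the factor indexed by $e=\{j,k\}$, which avoids both $x_j$ and $x_k$, forces a third variable to appear in any product. I expect the main obstacle to be the clean verification of Hall's condition---keeping track of which edges are trapped by a vertex set $T$ and matching the resulting inequalities to the two desired conditions---rather than any algebraic subtlety. One could alternatively replace Hall's theorem by an explicit greedy construction of $\sigma$, but the matching formulation makes the case analysis most transparent.
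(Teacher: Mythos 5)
Your proof is correct, but it takes a genuinely different route from the paper's. You encode membership of a degree-$\binom{n}{2}$ monomial in the product as the existence of a system of representatives (a vertex $\sigma(e)\notin e$ for each edge $e$ of the complete graph, with prescribed multiplicities $a_i$) and settle existence with Hall's theorem; your case analysis on $|U|$ correctly identifies the binding constraints as exactly $a_i\le\binom{n-1}{2}$ (from $|U|=1$, using $\binom{n}{2}-(n-1)=\binom{n-1}{2}$) and ``at least three positive exponents'' (from $|U|=2$, using $n\ge 3$). The paper instead proves the hard inclusion by downward induction on $\max_i a_i$: the base case $\max_i a_i=\binom{n-1}{2}$ is handled by factoring the product as $J_1J_2$ (the factors avoiding $x_1$ times the factors containing $x_1$), and the inductive step replaces $f$ by $(x_1/x_i)f$, applies the induction hypothesis, and then uses the polymatroidal exchange property (available because products of polymatroidal ideals are polymatroidal) to descend back to $f$. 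Your approach buys self-containedness: it never invokes polymatroidality, and it in effect reproves a special case of the Hall/Rado description of transversal polymatroids, so it generalizes verbatim to arbitrary products of ideals generated by subsets of the variables; the paper's argument stays inside the polymatroid machinery it has already set up and needs no matching theory. One point you should make explicit: in your bipartite model a vertex $i$ with $a_i=0$ contributes no copies, so the neighborhood of an edge-node consists only of copies of vertices in the support of $a$, and Hall's condition is a priori the family of inequalities indexed by subsets $U$ of that support; the extra inequalities you check for general $U$ (those meeting the zero set) must be, and indeed are, trivially implied --- e.g.\ for $|U|=2$ with $a_j=0$ one needs $a_k\le\binom{n}{2}-1$, which follows from $a_k\le\binom{n-1}{2}$. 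This is a one-line verification, not a gap, but as written your reduction to ``trapped edges'' silently conflates vertices with vertex-copies.
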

\begin{proof}
Let us denote the ideal by $I$. We can see that $a_i \le \binom{n-1}{2}$ for any $x_1^{a_1} \cdots x_n^{a_n} \in I$, since there are only $\binom{n-1}{2}$ different ideals $(x_{i_1}, \ldots, x_{i_{n-2}})$ containing a specific variable $x_i$. Also, a monomial $x_1^{a_1}x_2^{a_2}$ can not be contained in $I$, since the ideal $(x_3, \ldots, x_n)$ is one of the ideals in the product defining $I$. The same argument holds for any other monomial in two variables. Hence any $x_1^{a_1} \cdots x_n^{a_n} \in I$ must have at least three positive $a_i$'s.  We must now prove that all monomials satisfying these conditions are contained in $I$. 

Let $f=x_1^{a_1} \cdots x_n^{a_n}$ be a monomial of degree $\binom{n}{2}$ such that $a_i \le \binom{n-1}{2}$, for all $i$, and at least three $a_i$'s are positive. We want to prove that $f \in I_{n-2}$. This will be proved by induction over $\max_i(a_i)$, starting from the highest possible value $\max(a_i)=\binom{n-1}{2}$. Without loss of generality, we may, in the base case, assume that $a_1= \binom{n-1}{2}$. Notice that $I=J_1 J_2$ for
\[
 J_1=\prod_{2 \le i_1 < \dots < i_{n-2} \le n} \!\!(x_{i_1}, \ldots, x_{i_{n-2}}) \ \ \mbox{and} \ \ J_2= \prod_{2 \le i_1 < \dots < i_{n-3} \le n} \!\! (x_1,x_{i_1}, \ldots, x_{i_{n-3}}) \ .
\]
The ideal $J_1$ is generated by all monomials of degree $n-1$ in the variables $x_2, \ldots, x_n$, except the pure powers. In particular $x_2^{a_2} \cdots x_{n}^{a_{n}}$ is one of the generators. It is clear that $x_1^{\binom{n-1}{2}}=x_1^{a_1}$ is one of the generators in $J_2$, and it follows that $f=x_1^{a_1} \cdots x_n^{a_n}$ is one of the generators in $I$. 

Next, suppose that $f$ is such that $\max_i(a_i)=m< \binom{n-1}{2}$, and assume that the statement is true for all monomials $x_1^{b_1} \cdots x_n^{b_n}$ with $\max_i(b_i)>m$. Without loss of generality, we may assume that $a_1=m$.  First, notice that there is some $i$ such that $\frac{x_1}{x_i}f$ is a monomial satisfying our conditions, i. e. containing at least three variables. Indeed, if such an $i$ would not exist, then $f$ would be on the form $f=x_1^mx_ix_j$. But this would imply
\[ m=\binom{n}{2}-2 \ge \binom{n}{2} - \binom{n-1}{1} = \binom{n-1}{2}_{\ \textstyle{,}}\]
contradicting $m< \binom{n-1}{2}$. Again without loss of generality, we assume that 
\[g=\frac{x_1}{x_2}f=x_1^{m+1}x_2^{a_2-1}x_3^{a_3} \cdots x_n^{a_n}\]
is a monomial in at least three variables. By the induction hypothesis $g \in I$. Then we also have $x_1^{a_2-1}x_2^{m+1}x_3^{a_3} \cdots x_n^{a_n} \in I,$
by symmetry. Since $I$ is polymatroidal and $m+1>a_2-1$, we can conclude that $f=\frac{x_2}{x_1}g \in I$. 
  \end{proof}

It follows that $I^k$ is generated by all monomials $x_1^{a_1} \cdots x_n^{a_n}$ of degree $k \binom{n}{2}$ such that $a_i \le k\binom{n-1}{2}$ and at least three $a_i$'s are positive. In a similar way as for the square-free monomials ideals, we can see that the socle elements must have degree $k\binom{n}{2}-1$. It follows that the socle is spanned by the monomials $x_1^{a_1} \cdots x_n^{a_n}$ of degree $k \binom{n}{2}-1$ such that $a_i < k\binom{n-1}{2}$ and at least three $a_i$'s positive. We can now use Theorem \ref{thm:main_dim1-2} to compute the Betti numbers of the product of all ideals generated by $n-2$ variables.

\begin{corollary}
  Let $S=\mathbb{k}[x_1, \ldots, x_n]$, and
 \[
 I= \prod_{1 \le i_1 < \dots < i_{n-2} \le n} (x_{i_1}, \ldots, x_{i_{n-2}}) \subset S.
\]
Let $\beta_i^{(k)}=\beta_i(S/I^k)$. 
  Then
 \[ \beta_1^{(k)} = \binom{n+k\binom{n}{2}-1}{n-1} -n\binom{n+k(n-1)-2}{n-1}- \binom{n}{2}\Big(1+k(n-1)\Big( \frac{n}{2}-1 \Big)\Big), \]
\[ \beta_n^{(k)} = \binom{n+k\binom{n}{2}-2}{n-1} -n\binom{n+k(n-1)-2}{n-1}- \binom{n}{2}\Big(1+k(n-1)\Big( \frac{n}{2}-1 \Big)\Big) \]
and 
\[
 \begin{bmatrix}
\beta_2^{(k)} \\ \vdots \\ \vdots \\ \beta_{n-1}^{(k)}
\end{bmatrix} = \left[ \binom{j}{i} \right]_{\substack{0 \le i \le n-3 \\ 0 \le j \le n-3}} \left[ \beta_1^{(k)} - (-1)^{n+i}\beta_n^{(k)} \binom{n-2}{i}-\binom{k\binom{n}{2}+i}{i} \right]_{0 \le i \le n-3}
\]
\end{corollary}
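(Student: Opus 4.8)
The plan is to reduce the entire statement to Theorem~\ref{thm:main_dim1-2}. Since $s = n-2$ we have $\dim I = n - s = 2$, and the $\dim I = 2$ case of that theorem already expresses the middle Betti numbers $\beta_2^{(k)}, \ldots, \beta_{n-1}^{(k)}$ as an explicit linear combination of the two boundary numbers $\beta_1^{(k)}$ and $\beta_n^{(k)}$; the displayed matrix identity in the corollary is nothing but this specialization with $d = \binom{n}{2}$. So the only real content is to compute $\beta_1^{(k)}$, the number of minimal generators of $I^k$, and $\beta_n^{(k)} = \dim_{\mathbb{k}}\Soc(S/I^k)$, as explicit polynomials in $k$; feeding these into Theorem~\ref{thm:main_dim1-2} then finishes the proof mechanically.

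For $\beta_1^{(k)}$ I would count the generators described in the lemma above: the monomials $x_1^{a_1}\cdots x_n^{a_n}$ of degree $k\binom{n}{2}$ with $a_i \le k\binom{n-1}{2}$ for all $i$ and with at least three positive exponents. I would do this by inclusion--exclusion on the ``bad'' monomials. Starting from the $\binom{n + k\binom{n}{2} - 1}{n-1}$ monomials of the correct degree, I first remove those violating the per-variable bound: after checking the key point that two distinct exponents can never both exceed $k\binom{n-1}{2}$ (their degrees would already exceed $k\binom{n}{2}$), each such monomial factors uniquely as $x_i^{k\binom{n-1}{2}+1}$ times a monomial of the remaining degree $k(n-1)-1$, giving the clean term $n\binom{n+k(n-1)-2}{n-1}$. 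I then remove the monomials supported on at most two variables; those supported on a single variable already fail the degree bound, and those supported on a fixed pair are counted by an elementary one-dimensional count. The remaining closed-form correction in the statement accounts for the \emph{overlap} between the bound-violating family and the at-most-two-variables family, which by inclusion--exclusion must be added back.

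The computation of $\beta_n^{(k)}$ runs in exactly the same way. Using the socle description recorded just before the corollary --- the span of the degree $k\binom{n}{2}-1$ monomials with $a_i < k\binom{n-1}{2}$ and at least three positive exponents --- I would repeat the inclusion--exclusion in degree $k\binom{n}{2}-1$ with the strict bound, producing the leading term $\binom{n+k\binom{n}{2}-2}{n-1}$, the same $-\,n\binom{n+k(n-1)-2}{n-1}$ contribution from the bound-violators, and a two-variable correction of the same shape.

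The main obstacle is the combinatorial bookkeeping in these two counts, and in particular getting the overlap term right: for each fixed pair of variables one must determine precisely how many admissible monomials supported on that pair respect the relevant (non-strict, resp.\ strict) bound, and then combine this with the first-order bound-violation count so that every excluded monomial is discounted exactly once. A secondary point is to justify that the ``at most one exponent exceeds the bound'' reduction is valid in the range under consideration, so that the inclusion--exclusion over the bound truncates at first order; the degenerate small cases --- for instance $n=3$, where $I$ and hence $I^k$ is principal --- are best checked by hand and reconciled with the formula separately. Once the two boundary counts are in hand as polynomials in $k$, substitution into Theorem~\ref{thm:main_dim1-2} completes the proof.
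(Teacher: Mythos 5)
Your overall strategy is the same as the paper's: specialize Theorem~\ref{thm:main_dim1-2} with $d=\binom{n}{2}$ and $\dim I=2$, compute $\beta_1^{(k)}$ from the lemma's description of the generators of $I^k$ by inclusion--exclusion, and compute $\beta_n^{(k)}=\dim_{\mathbb{k}}\Soc(S/I^k)$ from the socle description; your bookkeeping (subtract all monomials supported on at most two variables, then add back their overlap with the bound-violating family) is equivalent to the paper's (subtract only the two-variable monomials that respect the bound). The genuine gap is that you defer the one step that carries all the content, and the two claims you make about it --- that the closed-form correction in the statement ``accounts for'' your overlap count, and that the socle correction has ``the same shape'' --- are exactly where the argument fails. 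For a fixed pair $\{x_i,x_j\}$, the monomials $x_i^{a}x_j^{k\binom{n}{2}-a}$ with both exponents at most $k\binom{n-1}{2}$ are those with $k(n-1)\le a\le k\binom{n-1}{2}$, and there are
\[
k\binom{n-1}{2}-k(n-1)+1 \;=\; 1+k(n-1)\Big(\frac{n}{2}-2\Big)
\]
of them, which is \emph{not} the quantity $1+k(n-1)\big(\frac{n}{2}-1\big)$ appearing in the statement. For the socle the bound is strict, so the per-pair count drops by one, to $k(n-1)\big(\frac{n}{2}-2\big)$: the two corrections do not have the same shape.

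To see concretely that your final step ``feed the counts into Theorem~\ref{thm:main_dim1-2} and finish mechanically'' would fail, take $n=4$, $k=1$. Direct enumeration gives $44$ monomials of degree $6$ with all exponents at most $3$, of which $6$ (namely the $x_i^3x_j^3$) are supported on two variables, so $\beta_1^{(1)}=38$; and the socle is spanned by the $16$ monomials of degree $5$ with all exponents at most $2$, so $\beta_4^{(1)}=16$. The displayed formulas give instead $84-40-24=20$ and $56-40-24=-8$. So an honest execution of your inclusion--exclusion produces boundary counts that contradict the stated closed forms rather than confirm them. (As it happens, the paper's own proof has the same defect: its intermediate per-pair count $k\binom{n-1}{2}-k(n-1)+1$ is correct, but its simplification to $1+k(n-1)(\frac{n}{2}-1)$ is an algebra slip, and its assertion that ``the same argument'' yields the identical correction for $\beta_n^{(k)}$ ignores the strict inequality. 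By leaving the ``combinatorial bookkeeping'' as an acknowledged obstacle instead of doing it, your proposal inherits this error rather than exposing it; a complete write-up would have to correct both displayed formulas, and would also need to address separately the degenerate case $n=3$, which you rightly flag, where more than one exponent can violate the bound.)
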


\begin{proof}
 To find $\beta_1^{(k)}$ we shall compute the number of monomials $x_1^{a_1} \cdots x_n^{a_n}$ of degree $k \binom{n}{2}$ such that $a_i \le k\binom{n-1}{2}$, with at least three positive $a_i$'s. A monomial $x_1^{a_1} \cdots x_n^{a_n}$ of degree $k \binom{n}{2}$ can have at most one $a_i > k\binom{n-1}{2}$, or the total degree will be too high. It follows that the number of monomials $x_1^{a_1} \cdots x_n^{a_n}$ of degree $k \binom{n}{2}$ such that $a_i \le k\binom{n-1}{2}$ is
 \[
  \binom{n+k\binom{n}{2}-1}{n-1} -n\binom{n+k(n-1)-2}{n-1}_{\ \textstyle{.}}
 \]
 From this we must subtract the number of monomials 
 \[
  x_i^{a_i}x_j^{k \binom{n}{2}-a_i} \ \mbox{such that} \ a_i \le k \binom{n-1}{2}, \ k \binom{n}{2}-a_i\le k \binom{n-1}{2}_{\ \textstyle{.}}
 \]
The condition on $a_i$ simplifies to 
\[
 k(n-1) \le a_i \le  k \binom{n-1}{2}_{\ \textstyle{,}} 
\]
so for each pair of variables $x_i, x_j$ there are 
\[
 k \binom{n-1}{2}-k(n-1)+1=1+k(n-1)\Big( \frac{n}{2}-1 \Big)
\]
such monomials. This proves 
 \[ \beta_1^{(k)} = \binom{n+k\binom{n}{2}-1}{n-1} -n\binom{n+k(n-1)-2}{n-1}- \binom{n}{2}\Big(1+k(n-1)\Big( \frac{n}{2}-1 \Big)\Big)_{\ \textstyle{.}} \]
 To find $\beta_n^k$ we shall compute the number of monomials $x_1^{a_1} \cdots x_n^{a_n}$ of degree $k \binom{n}{2}-1$ such that $a_i < k\binom{n-1}{2}$ and at least three $a_i$'s positive. The same argument as above results in 
 \[ \beta_n^{(k)} = \binom{n+k\binom{n}{2}-2}{n-1} -n\binom{n+k(n-1)-2}{n-1}- \binom{n}{2}\Big(1+k(n-1)\Big( \frac{n}{2}-1 \Big)\Big). \]
 The expression for the remaining Betti numbers follows directly from Theorem \ref{thm:main_dim1-2}. 
  \end{proof}

\section{The Rees algebra of monomial ideals with linear powers}\label{sec:rees}
The object of this section is to study the generators of the Rees ideal, for certain monomial ideals with linear powers. 

For a bigraded module $M$, we let $M_{(i,j)}$ denote the graded component of elements of degree $(i,j)$. We also let $M_{(*,j)}=\bigoplus_{i\ge 0} M_{(i,j)}$, and we say that the elements of $M_{(*,j)}$ are of degree $(*,j)$, and analogously for $M_{(i,*)}$.   

\subsection{The degrees of the generators of the Rees ideal}
We start by the following observation on the Rees ideal of ideals with linear resolution. 

\begin{lemma}\label{lemma:rees1}
 Let $I$ be an ideal such that $I^k$ has a linear resolution, and let $J$ be the Rees ideal of $I$. Then the elements of degree $(s,k)$, with $s \ge 1$, in $J$ are generated by the elements of degree $(1,k)$ in $J$. 
\end{lemma}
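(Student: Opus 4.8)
The plan is to exploit the bigrading on $T=S[y_1,\ldots,y_m]$ and the fact that $J=\ker\phi$ is generated by binomials that are homogeneous in the $y$-degree. Fix the second coordinate $k$, so we are looking at the component $J_{(*,k)}=\bigoplus_{s\ge 0}J_{(s,k)}$, which is a module over $S$ generated in the $x$-variables. I want to show that every element of $J_{(s,k)}$ with $s\ge 1$ lies in the $S$-submodule generated by $J_{(1,k)}$, i.e. $J_{(s,k)}=\mathfrak{m}^{s-1}J_{(1,k)}$ where $\mathfrak{m}=(x_1,\ldots,x_n)$, or at least $J_{(s,k)}=S_{s-1}\cdot J_{(1,k)}$. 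The key translation is that an element of $T_{(s,k)}$ is a sum of terms $x^\alpha\,y_{i_1}\cdots y_{i_k}$ with $|\alpha|=s$, and under $\phi$ it maps into $I^k t^k\subset S[t]$ in $x$-degree $dk+s$. So $J_{(*,k)}$ records exactly the linear syzygies, in degree $dk+s$, among the generators $f_{i_1}\cdots f_{i_k}$ of $I^k$, as $s$ varies.

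The first step is to make this module-theoretic identification precise: I would introduce the $S$-module surjection $\psi\colon\bigoplus_{|\lambda|=k}S(-dk)\,e_\lambda\to I^k$ sending the basis vector $e_\lambda$ (indexed by multisets $\lambda$ of size $k$) to the product $f_\lambda$, and observe that $J_{(*,k)}$ is isomorphic, as a graded $S$-module, to $\ker\psi$, the module of syzygies of the chosen generators of $I^k$ — with the $x$-degree $s$ on the Rees side corresponding to internal degree $dk+s$ on the syzygy side. The redundancies in the $e_\lambda$'s (the generating set of $I^k$ is not minimal, since $f_\lambda=f_\mu$ can happen) contribute trivial $(0,k)$ and low-degree relations, which I would handle by passing to a minimal generating set or by noting these only enlarge $J_{(0,k)}$ and $J_{(1,k)}$ and do not affect the statement for $s\ge 1$. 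The second, decisive step is to invoke the hypothesis: since $I^k$ has a \emph{linear} resolution, its first syzygy module $\Syz_1(I^k)=\ker\psi$ (after reducing to minimal generators) is generated in a single internal degree, namely $dk+1$, because linearity forces the first syzygies to be generated by the linear ones. Under the dictionary above, internal degree $dk+1$ is exactly $x$-degree $s=1$, so $\ker\psi$ is generated in $x$-degree one, which says precisely that $J_{(s,k)}=S_{s-1}\cdot J_{(1,k)}$ for all $s\ge 1$.

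The main obstacle I anticipate is bookkeeping around the non-minimal generating set $\{f_\lambda\}$: the Rees map uses \emph{all} monomials $y_\lambda$, whereas the linearity statement applies to a \emph{minimal} free resolution of $I^k$. I need to check that passing between the full generating set and a minimal one only alters $J$ in degrees that are already generated by $J_{(0,k)}$ (Koszul-type relations $y_i-y_j$ when $f_i=f_j$, of degree $(0,k)$ combined with degree-$(1,0)$ moves) and by $J_{(1,k)}$, so that the conclusion for $s\ge 1$ is unaffected. Concretely I would argue that any relation among the redundant generators is itself, after one $x$-multiplication, expressible through degree-$(1,k)$ elements, so the reduction does not cost anything above $x$-degree one. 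Once this cleanup is in place, the implication ``linear resolution $\Rightarrow$ first syzygies generated in the lowest degree $\Rightarrow$ $J_{(*,k)}$ generated in $x$-degree one'' gives the lemma directly.
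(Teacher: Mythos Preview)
Your proposal is correct and follows essentially the same line as the paper's proof: identify $J_{(*,k)}$ with the syzygy module of the (possibly non-minimal) generating set $\{\phi(y^\lambda)\}_{|\lambda|=k}$ of $I^k$, and then use the linear resolution of $I^k$ to conclude that this syzygy module is generated in $x$-degree one. The paper's argument is the same, only stated more tersely; your additional discussion of the non-minimality of the generating set is a legitimate point that the paper glosses over with the phrase ``a (not necessarily minimal) generating set,'' and your observation that the resulting degree-$(0,k)$ relations land in $S\cdot J_{(1,k)}$ once multiplied by any $x_j$ is exactly what makes the gloss harmless.
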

\begin{proof}
 Suppose $I=(f_1, \ldots, f_m)$, and let $h\in J$ be homogeneous of degree $(s,k)$. Then $h=\sum_{i=0}^Ng_ih_i$, where $g_i\in S$, and the $h_i$'s are the monomials of degree $k$ in the variables $y_1, \ldots, y_m$. The monomials $h_i$ are mapped to a (not necessarily minimal) generating set of $I^k$ under $\phi$. By definition of the Rees ideal, $\phi(h)=0$, which implies $\sum_{i=1}^mg_i\phi(h_i)=0$. But since $I^k$ has a linear resolution, all relations on the generators are generated by relations in degree one. It follows that $h$ is generated by elements of degree $(1,k)$. 
  \end{proof}

An ideal $I$ is said to be of \emph{fiber type} if its Rees ideal is generated in degrees $(0,*)$ and $(*,1)$. This terminology was introduced in \cite{herz-hibi-vla}, where they prove that polymatroidal ideals are of fiber type. By \cite[Theorem 3.1]{villa} edge ideals of graphs are also of fiber type. 

\begin{lemma}
Let $I$ be an ideal of fiber type with linear resolution. Then the Rees ideal of $I$ is generated in degrees $(0,*)$ and $(1,1)$.
\end{lemma}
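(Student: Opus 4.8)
The plan is to deduce this directly from the fiber type hypothesis together with Lemma~\ref{lemma:rees1} applied at $k=1$. By the definition of fiber type, the Rees ideal $J$ is generated by homogeneous elements of degree $(0,*)$ and of degree $(*,1)$; since the target degrees in the statement are $(0,*)$ and $(1,1)$, the entire task reduces to showing that the degree $(*,1)$ generators can be replaced by generators of degree $(1,1)$, while the degree $(0,*)$ generators are kept untouched.

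First I would record that each generator of degree $(*,1)$ is homogeneous of some bidegree $(s,1)$, and argue that necessarily $s \ge 1$. This is because the component $J_{(0,1)}$ vanishes: writing $I=(f_1,\dots,f_m)$ with the $f_i$ a minimal generating set, an element of $J_{(0,1)}$ is a $\mathbb{k}$-linear combination $\sum_i c_i y_i$ with $\sum_i c_i f_i=0$. The linear resolution hypothesis forces $I$ to be generated in a single degree, so the $f_i$ form a $\mathbb{k}$-basis of that graded piece of $I$ and are in particular linearly independent over $\mathbb{k}$; hence the combination is trivial. Consequently every degree $(*,1)$ generator lies in some $J_{(s,1)}$ with $s \ge 1$.

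Next I would invoke Lemma~\ref{lemma:rees1} with $k=1$: since $I=I^1$ has a linear resolution, every element of $J$ of degree $(s,1)$ with $s \ge 1$ is generated by the elements of degree $(1,1)$ in $J$. Applying this to each of the (finitely many) degree $(*,1)$ generators furnished by the fiber type property rewrites them in terms of degree $(1,1)$ elements, so together with the untouched degree $(0,*)$ generators we obtain a generating set of $J$ concentrated in degrees $(0,*)$ and $(1,1)$, as desired.

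The argument is essentially a one-step reduction once Lemma~\ref{lemma:rees1} is in hand, so there is no serious obstacle; the only point requiring a moment's care is confirming that the $(*,1)$ generators genuinely sit in degrees $(s,1)$ with $s\ge 1$, i.e.\ that nothing survives in degree $(0,1)$, which is exactly the linear independence remark above. I would also emphasize that only the case $k=1$ of Lemma~\ref{lemma:rees1} is used, so the hypothesis that $I$ itself has a linear resolution is all that is needed here, and the full strength of linear powers is not invoked.
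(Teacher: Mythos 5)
Your proof is correct and takes essentially the same route as the paper: the fiber type hypothesis reduces everything to the degree $(*,1)$ generators, which are then rewritten via Lemma~\ref{lemma:rees1} applied with $k=1$. Your extra check that $J_{(0,1)}=0$ is sound but not strictly necessary, since any generator of degree $(0,1)$ already has degree of the form $(0,*)$ and is therefore permitted in the target generating set.
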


\begin{proof}
By Lemma \ref{lemma:rees1} the elements if degree $(*,1)$ in the Rees ideal are generated by elements of degree $(1,1)$. Since $I$ is of fiber type it follows that the Rees ideal is generated in degrees $(0,*)$ and $(1,1)$.  
  \end{proof}

We are especially interested in monomial ideals with linear resolution where the elements of degree $(0,*)$ in the Rees ideal are generated in degree $(0,2)$. Notice that the $(0,*)$-part of the Rees ideal is the same as the toric ideal defined by the monomial generators. More precisely, if $I$ is generated by monomials $f_1, \ldots, f_m$ define a map $\xi: \mathbb{k}[y_1, \ldots, y_m] \to S$ by $\xi(y_i)=f_i$. Then the $(0,*)$-part of the Rees ideal of $I$ is given by $\ker \xi$. It was proved in \cite{herz-hibi} that polymatroidal ideals with the so called \emph{strong exchange property} have toric ideal generated in degree 2. 

\begin{definition}
  A polymatroidal ideal $I$ is said to have the \emph{strong exchange property (SEP)} if it satisfies the following condition. Let $\mathcal{G}(I)$ be the generating set of monomials. If $x_1^{a_1} \cdots x_n^{a_n}$ and $x_1^{b_1} \cdots x_n^{b_n}$ are elements of $\mathcal{G}(I)$, such that $a_i>b_i$ and $a_j<b_j$, then $(x_j/x_i)x_1^{a_1} \cdots x_n^{a_n} \in \mathcal{G}(I)$.  
\end{definition}

An example of polymatroidal ideals with the SEP are the ideals generated by all square-free monomials in some degree, that we saw in Section \ref{sec:sqfree}. The generators of the Rees ideal reflects the ''exchange relations''.

\begin{exampl}
 We return to algebra $S=\mathbb{k}[x_1,x_2,x_3,x_4]$ and the square-free monomial ideal
 \[
  I=(x_1x_2,x_1x_3,x_1x_4,x_2x_3,x_2x_4,x_3x_4) \subset S
 \]
from Example \ref{ex:sqfree_betti_deg2}. Let $T=S[y_{12},y_{13},y_{14},y_{23},y_{24},y_{34}]$, and define $\phi:T \to S[t]$ by $x_i \mapsto x_i$ and $y_{ij} \mapsto x_ix_j t$. Then the Rees ideal $J= \ker \phi$ is generated by the binomials
\begin{align*}
&x_3y_{12}-x_2y_{13}, \ x_1y_{23}-x_3y_{12}, \ x_3y_{14}-x_4y_{13}, \ x_2y_{14}-x_4y_{12}, \\
&x_3y_{24}-x_4y_{23}, \ x_1y_{24}-x_4y_{12}, \ x_2y_{34}-x_4y_{23}, \ x_1y_{34}-x_4y_{13} 
\end{align*}
of degree $(1,1)$ and 
\[ y_{12}y_{34}-y_{13}y_{24}, \ y_{12}y_{34}-y_{14}y_{23} \]
of degree $(0,2)$. 
\end{exampl}

Another class of polymatroidal ideals with toric ideal generated in degree 2 are the \emph{transversal} ideals, see \cite[Theorem 3.5]{conca}. A polymatroidal ideal is called transversal if it is the product of ideals generated by variables. So in particular, the ideals studied in Section \ref{sec:subsets} are transversal.
 Next, we will see a class of edge ideals with toric ideal generated in degree 2. 

Let $G=(V,E)$ be a bipartite graph, with the partition $V=\{x_1, \ldots, x_n\} \cup \{y_1, \ldots, y_m\}$ of the vertices. Let $e_{ij}$ denote the edge $\{x_i,y_j\}$. The graph $G$ is called a \emph{Ferrers graph} if, after possibly re-indexing of the vertices, 
\[
 e_{ij} \in E \ \ \implies \ \ e_{rs} \in E \ \mbox{for all} \ 1 \le r \le i, \ \mbox{and} \ 1 \le s \le j.
\]
The name comes from the fact that the graph can be represented by a Ferrers diagram. The edge ideal of a Ferrers graph is called a \emph{Ferrers ideal}. It is proved in \cite{cor-nag} that the edge ideal of a bipartite graph has a linear resolution if and only if it is a Ferrers ideal.

\begin{lemma}\label{lemma:ferrer1}
 Let $J$ be the Rees ideal of a Ferrers ideal. The elements of degree $(0,*)$ in $J$ are generated by the elements of degree $(0,2)$ in $J$. 
\end{lemma}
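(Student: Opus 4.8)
The plan is to identify the degree-$(0,*)$ part of $J$ with the toric ideal of the edge monomials and then to exhibit a quadratic straightening law governed by the staircase shape of the graph. Write $S=\mathbb{k}[x_1,\dots,x_n,y_1,\dots,y_m]$ and let $T_{ij}$ denote the Rees variable attached to the generator $x_iy_j$, so that $T_{ij}$ exists exactly when $e_{ij}\in E$. As recorded just before the lemma, the $(0,*)$-component of $J$ equals $\ker\xi$, where $\xi(T_{ij})=x_iy_j$. Being the kernel of a monomial map, $\ker\xi$ is spanned over $\mathbb{k}$ by the binomials $M-M'$ with $M,M'$ monomials in the $T_{ij}$ satisfying $\xi(M)=\xi(M')$; such an equality says precisely that $M$ and $M'$ involve the same multiset of row indices and the same multiset of column indices. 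The degree-$(0,2)$ part of $\ker\xi$ is spanned by the exchange binomials $T_{ij}T_{kl}-T_{il}T_{kj}$ for which all four of $e_{ij},e_{kl},e_{il},e_{kj}$ are edges, so the goal becomes to show that every such $M-M'$ lies in the ideal these exchanges generate.

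First I would record the staircase structure: by the Ferrers condition the neighbourhood of $x_i$ is $\{y_1,\dots,y_{\lambda_i}\}$ for integers $\lambda_1\ge\lambda_2\ge\cdots\ge\lambda_n$, so that $e_{ij}\in E$ if and only if $j\le\lambda_i$. I then attach to a monomial $M=T_{i_1j_1}\cdots T_{i_dj_d}$ a \emph{normal form}: sort the row multiset increasingly as $r_1\le\cdots\le r_d$, sort the column multiset decreasingly as $s_1\ge\cdots\ge s_d$, and set the normal form to be $\prod_a T_{r_as_a}$ (the \emph{antitone pairing}). This multiset of pairs is independent of how ties among equal rows or equal columns are resolved, hence depends only on the two multisets, i.e.\ only on $\xi(M)$.

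The heart of the argument is to reduce $M$ to its normal form using only valid exchanges. Whenever two factors $T_{i,c},T_{i',c'}$ form a strictly co-monotone pair, say $i<i'$ and $c<c'$, I replace them by $T_{i,c'},T_{i',c}$. This is a legitimate degree-$(0,2)$ exchange exactly because of the Ferrers hypothesis: from $\lambda_i\ge\lambda_{i'}\ge c'$ we get $e_{i,c'}\in E$, and from $\lambda_{i'}\ge c'>c$ we get $e_{i',c}\in E$, so no non-edge $T_{rs}$ is ever produced. Each such move strictly decreases the potential $\sum_a i_aj_a$ by $(i'-i)(c'-c)>0$, so the process terminates; a terminal monomial has no strictly co-monotone pair and is therefore the antitone normal form (which is thus itself a valid monomial). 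Since each step alters $M$ by a multiple of a degree-$(0,2)$ generator, $M$ is congruent to its normal form modulo the quadratic ideal. Applying this to both $M$ and $M'$, which share a normal form because $\xi(M)=\xi(M')$, shows $M-M'$ lies in the quadratic ideal, which proves the lemma.

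I expect the main obstacle to be the two edge-validity checks in the reduction step: one must guarantee that every exchange stays inside the generating set of edges, and this is exactly the point that fails for general bipartite graphs and succeeds here because $\lambda$ is non-increasing. A secondary technical point is confirming that the antitone pairing is well defined under ties and consists of genuine edges; both follow from the same monotonicity of $\lambda$. The entire argument may alternatively be phrased as the assertion that the exchange binomials form a quadratic Gröbner basis for $\ker\xi$, with the reduction above computing the corresponding normal form.
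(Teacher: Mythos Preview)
Your argument is correct and proceeds by a different mechanism than the paper's. The paper argues by induction on the degree $s$: writing a degree-$(0,s)$ binomial as $e_{i_1j_1}\cdots e_{i_sj_s}-e_{i_1k_1}\cdots e_{i_sk_s}$ with $i_1\ge\cdots\ge i_s$ and $j_1\ge k_1$, it locates the index $r$ with $k_r=j_1$ and performs the single quadratic exchange $e_{i_1k_1}e_{i_rk_r}\equiv e_{i_1j_1}e_{i_rk_1}$ (valid because $i_r\le i_1$ and $k_1\le j_1$ force $e_{i_rk_1}\in E$ by the Ferrers condition), thereby factoring $e_{i_1j_1}$ out of both terms and reducing to degree $s-1$. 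You instead normalize each monomial separately to the antitone pairing by repeatedly swapping strictly co-monotone pairs, with termination guaranteed by the decreasing potential $\sum_a i_aj_a$. Both arguments rest on exactly the same edge-validity check supplied by $\lambda_1\ge\cdots\ge\lambda_n$. Your route yields a bit more structure---an explicit canonical form and, as you observe, essentially a quadratic Gr\"obner basis for the toric ideal---while the paper's is shorter and avoids introducing either the potential or the normal form.
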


\begin{proof}
 The elements of degree $(0,s)$ in $J \subseteq T$ are linear combinations of elements like
 \[
  f=e_{i_1 j_1} \cdots e_{i_s j_s} -e_{i_1 k_1} \cdots e_{i_s k_s} \ \mbox{where} \ \{j_1, \ldots, j_s\} = \{k_1, \ldots k_s\}. 
 \]
Let $L$ be the ideal generated by all elements on the form  $e_{i_1j_1}e_{i_2j_2}-e_{i_1j_2}e_{i_2j_1}$. We want to prove that $f=0$ in $T/L$. We may assume that $i_1 \ge i_2 \dots \ge i_s$, and $j_1 \ge k_1$. Since $\{j_1, \ldots, j_s\} = \{k_1, \ldots k_s\}$ there is an index $r$ such that $k_r=j_1$. It follows that $e_{i_rk_1}\in E$, since $e_{i_1j_1} \in E$, and $i_r \le i_1$ and $k_1 \le j_1$. Then $ e_{i_1k_1}e_{i_rk_r} =e_{i_ij_1}e_{i_rk_1}$ in $T/L$, and
\[
 f=e_{i_1 j_1} \cdots e_{i_s j_s} -e_{i_1 k_1} \cdots e_{i_s k_s} = e_{i_1j_1}(e_{i_2 j_2} \cdots e_{i_s j_s} -e_{i_2 \hat k_2} \cdots e_{i_s \hat k_s}),
\]
where $\hat k_i = k_i$ for $i \ne r$, and $\hat k_r = k_1$. The result now follows by induction over $s$. 
  \end{proof}

To summarize, we now have the following theorem. 

\begin{theorem}\label{thm:rees}
If $I$ is a polymatroidal ideal with the SEP, a transversal polymatroidal ideal, or a Ferrers ideal, then the Rees ideal of $I$ is generated in degrees $(0,2)$ and $(1,1)$. 
\end{theorem}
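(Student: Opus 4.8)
The plan is to assemble Theorem \ref{thm:rees} directly from the structural results already established in this section, since each of the three named classes has been treated piecewise. The unifying observation is that the Rees ideal $J$ decomposes, degree by degree, into a $(0,*)$-part and a ''mixed'' part, and the claim amounts to showing that in each case both parts are generated in the asserted low degrees $(0,2)$ and $(1,1)$. For the mixed part this is uniform across all three classes: each of these ideals has linear powers (polymatroidal ideals have linear powers, and Ferrers ideals are edge ideals generated in degree two with linear resolution, hence have linear powers by the result of \cite{herz-hibi-zhe}), so Lemma \ref{lemma:rees1} applies. Thus the elements of degree $(s,k)$ with $s \ge 1$ are generated by those of degree $(1,k)$; combining this with the fiber type property, which holds for polymatroidal ideals by \cite{herz-hibi-vla} and for edge ideals by \cite[Theorem 3.1]{villa}, the second lemma of this subsection reduces the mixed generators all the way down to degree $(1,1)$.

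What remains is to handle the $(0,*)$-part, i.e.\ the toric ideal $\ker \xi$ defined by the monomial generators, and show it is generated in degree $(0,2)$; this is precisely where the three cases diverge and must be invoked separately. First I would treat the SEP case, citing the result of \cite{herz-hibi} that a polymatroidal ideal with the strong exchange property has toric ideal generated in degree two. Next, the transversal case follows from \cite[Theorem 3.5]{conca}, which states exactly that transversal polymatroidal ideals have toric ideal generated in degree two. Finally, the Ferrers case is handled by Lemma \ref{lemma:ferrer1}, proved just above, which shows the degree $(0,*)$ elements of the Rees ideal of a Ferrers ideal are generated in degree $(0,2)$.

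Having both the $(0,*)$-part (generated in degree $(0,2)$ in each case) and the mixed part (generated in degree $(1,1)$ uniformly), I would conclude that $J$ is generated in degrees $(0,2)$ and $(1,1)$, as claimed.

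The step I expect to require the most care is the verification that the reduction lemmas genuinely apply to the Ferrers case, since Ferrers ideals are not polymatroidal in general and their treatment in this paper is routed through the edge-ideal machinery rather than the polymatroid machinery. Concretely, one must confirm that Ferrers ideals have linear powers (so that Lemma \ref{lemma:rees1} is legitimate) and that they are of fiber type (so the second lemma applies), both of which follow from the cited results on degree-two ideals and on edge ideals respectively. Once these citations are marshalled correctly, the proof is essentially a bookkeeping assembly of previously proved statements, with no substantial new computation required.
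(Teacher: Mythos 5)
Your proof is correct and follows essentially the same route as the paper, which states this theorem as a summary assembled from exactly the ingredients you cite: fiber type (via \cite{herz-hibi-vla} for polymatroidal ideals and \cite{villa} for edge ideals) together with Lemma \ref{lemma:rees1} to reduce the mixed part to degree $(1,1)$, and then the three case-by-case results for the $(0,*)$-part (\cite{herz-hibi} for SEP, \cite[Theorem 3.5]{conca} for transversal, and Lemma \ref{lemma:ferrer1} for Ferrers). Your only deviation is invoking linear powers where linear resolution of $I$ itself suffices (Lemma \ref{lemma:rees1} is only needed at $k=1$ once fiber type is in hand), but this is harmless since all three classes do have linear powers.
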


As pointed out in the introduction, there is reason the believe that Theorem \ref{thm:rees} holds for all polymatroidal ideals. Theorem \ref{thm:rees} does \emph{not} hold for all edge ideals with linear resolution, as the below example shows. For more details on the Rees algebra of an edge ideal see \cite{villa}.

\begin{exampl}
 Let $G$ be the graph in Figure \ref{fig:graph}. As we can see in Figure \ref{fig:graph_comp}, the complementary graph is chordal, so the edge ideal 
 \[
 I(G)=(x_1x_2,x_1x_3,x_2x_3,x_2x_4,x_2x_5,x_3x_5,x_3x_6,x_4x_5,x_5x_6)
  \]
 has linear powers. However, the Rees ideal is generated in the degrees $(1,1),(0,2)$ and $(0,3)$. The generator of degree $(0,3)$ corresponds to the relation 
 \[
  x_1x_2 \cdot x_3x_6 \cdot x_4x_5 = x_1x_3 \cdot x_2x_4 \cdot x_5x_6 \ . 
 \]
Notice also that $I(G)$ is not polymatroidal. To realize this, consider the two monomials $x_1x_2$ and $x_5x_6$, that are in the generating set of $I(G)$. The monomial $x_1x_2$ has higher $x_2$-degree than $x_5x_6$, but neither $(x_5/x_2)x_1x_2=x_1x_5$, nor $ (x_6/x_2)x_1x_2=x_1x_6$ belongs to $I(G)$. 
 
 \begin{figure}[ht]
 \begin{minipage}{0.35\textwidth}
  \centering
  \includegraphics[scale=0.3]{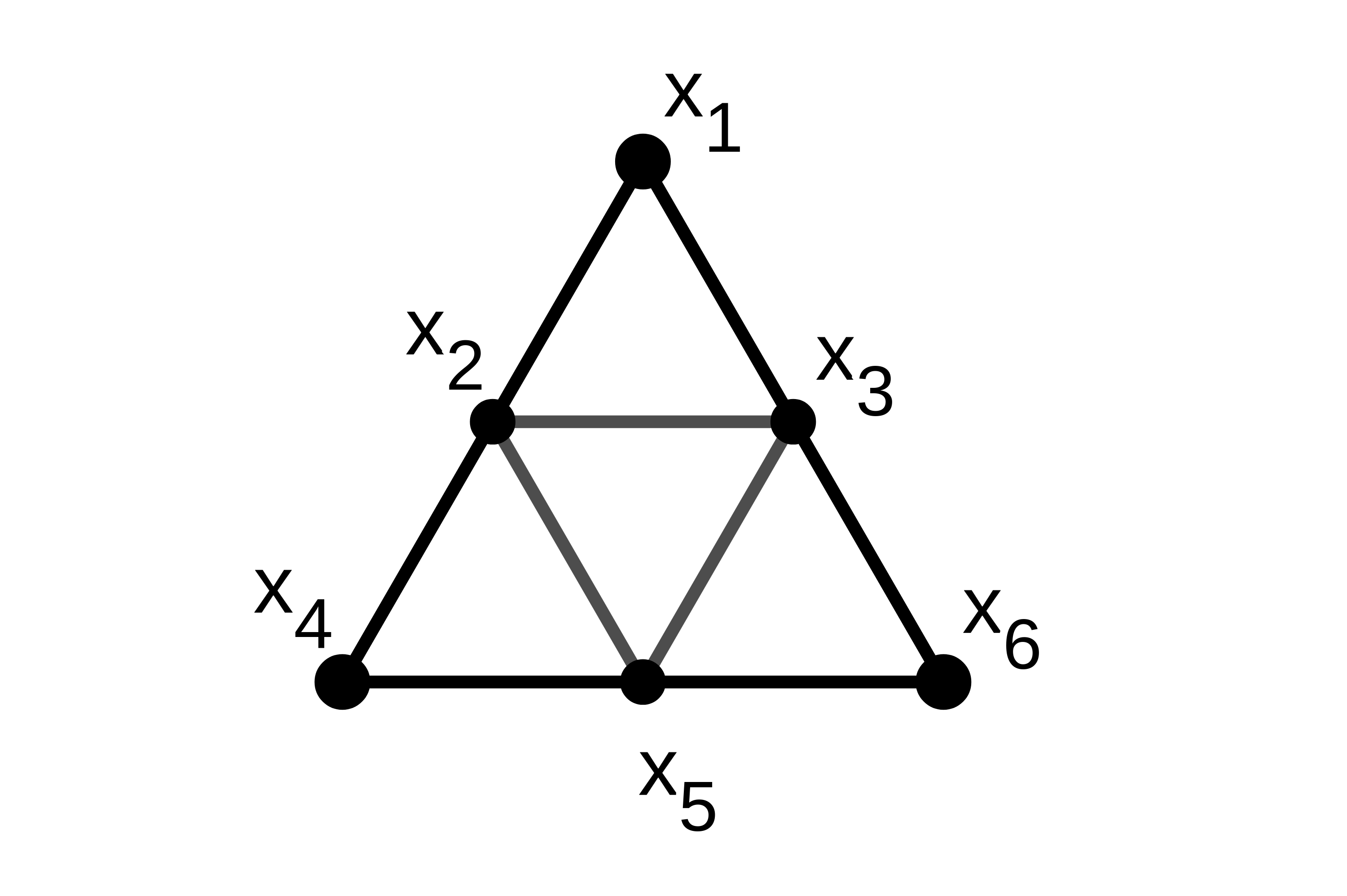}
  \caption{The graph $G$}\label{fig:graph}
 
 \end{minipage}
 \begin{minipage}{0.55\textwidth}

  \includegraphics[scale=0.4]{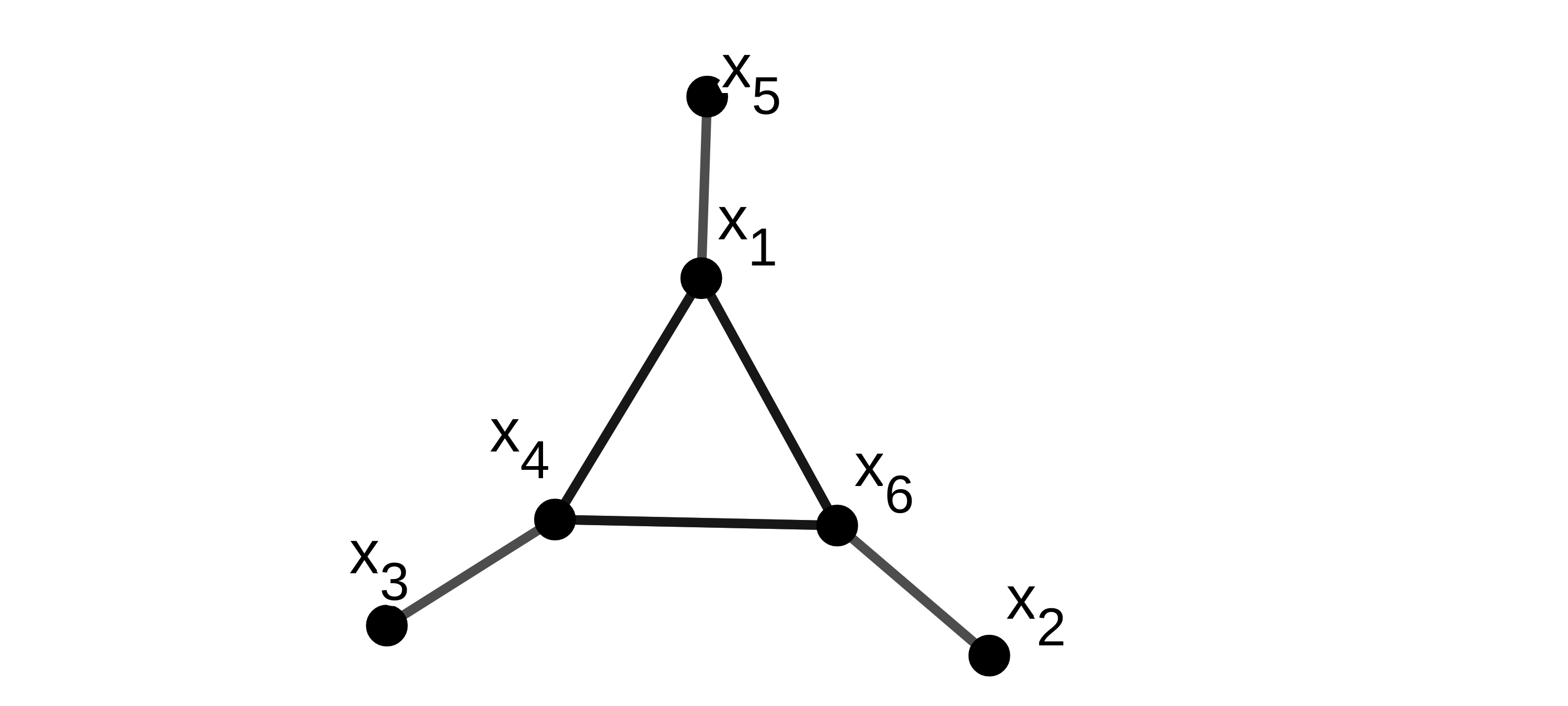}
  \caption{The complementary graph of $G$}\label{fig:graph_comp}         
               \end{minipage}
 \end{figure}
\end{exampl}

\subsection{The Betti numbers of the ideal generated by all square-free monomials of degree $n-1$}\label{subsec:square-free_n-1}
In this last section we will return to the square-free monomial ideals. Let $S=\mathbb{k}[x_1, \ldots, x_n]$, and let $I\subset S$ be the ideal generated by all square-free monomials of degree $n-1$. We will use the Rees algebra $R(I)$ to compute the Betti numbers of $S/I^k$. 

Let $f_i=x_1 \cdots x_n/x_i$, i. e. the product of all the variables except $x_i$. Then $I=(f_1, \ldots, f_n)$. As before, we let $T=S[y_1, \ldots, y_n]$, and $\phi:T \to S[t]$ be defined by $x_i \mapsto x_i$ and $y_i \mapsto f_it$. Since $I$ is a polymatroidal ideal with the SEP, the Rees ideal $J= \ker \phi$ is generated in the degrees $(0,2)$ and $(1,1)$, by Theorem \ref{thm:rees}. In fact, there are no non-trivial relations in degree $(0,2)$, and we can see that $J$ is generated by all elements on the form $x_iy_i-x_jy_j$. A minimal generating set for $J$ is then 
\[
 x_1y_1-x_2y_2, x_1y_1-x_3y_3, \ldots, x_1y_1-x_ny_n.
\]
These elements constitutes a regular sequence of length $n-1$ in $T$, and we can then use the Koszul complex to find the bigraded minimal resolution
 \begin{align}\label{eq:T-res}\begin{split}
 0  \to T(-n+1&,-n+1)^{\binom{n-1}{n-1}} \to \ldots  \\
 \ldots \to & T(-2,-2)^{\binom{n-1}{2}}  \to T(-1,-1)^{\binom{n-1}{1}}  \to T(0,0)  \to T/J \to 0
\end{split}\end{align}
of $T/J \cong R(I)$. Now notice that $\phi$ maps $T_{(*,k)}$ to $I^k$, and the kernel is $J_{(*,k)}$. Hence $ [T/J]_{(*,k)} \cong I^k$ as $S$-modules. Also,
\[
 T(-a,-b)_{(*,k)} = \bigoplus_{\sum u_i = k-b} \!\!\!\! S(-a)y_1^{u_1} \cdots y_n^{u_n} \cong {S(-a)^{\binom{n+k-b-1}{n-1}}}_{\ \textstyle{.}}
\]
Hence the degree $(*,k)$ part of (\ref{eq:T-res}) gives us the minimal resolution
\begin{equation}\label{eq:Ik-res}
 0 \to S(-(n-1))^{\binom{k}{n-1}} \to \dots \to S(-i)^{\binom{n+k-i-1}{n-1}\binom{n-1}{i}} \to \dots \to S(0)^{\binom{n+k-1}{n-1}} \to I^k \to 0
\end{equation}
of $I^k$.

\begin{theorem}
 Let $S=\mathbb{k}[x_1, \ldots, x_n]$, and let $I\subset S$ be the ideal generated by all square-free monomials of degree $n-1$. Then the Betti numbers of $S/I^k$ are given by
 \[
  \beta_i(S/I^k)=\binom{n+k-i}{n-1} \binom{n-1}{i-1}_{ \textstyle{.}}
 \]
\end{theorem}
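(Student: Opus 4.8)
The plan is to read the Betti numbers of $S/I^k$ straight off the minimal free resolution (\ref{eq:Ik-res}) of $I^k$ that has just been constructed, accounting only for the homological shift between resolving $I^k$ and resolving $S/I^k$. All the real work is already done in passing from the Koszul resolution (\ref{eq:T-res}) of $T/J$ to its degree-$(*,k)$ strand; what remains is bookkeeping.

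First I would rewrite (\ref{eq:Ik-res}) as
\[
0 \to G_{n-1} \to \dots \to G_0 \to I^k \to 0, \qquad G_j = S(-j)^{\binom{n+k-j-1}{n-1}\binom{n-1}{j}},
\]
and then splice it with the short exact sequence $0 \to I^k \to S \to S/I^k \to 0$ to obtain the complex
\[
0 \to G_{n-1} \to \dots \to G_0 \to S \to S/I^k \to 0 .
\]
This complex is exact because (\ref{eq:Ik-res}) is, with the composite $G_0 \to S$ sending basis elements to the degree-$(n-1)$ monomial generators of $I^k$.

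Next I would check that the spliced complex is in fact a \emph{minimal} free resolution of $S/I^k$. Minimality of (\ref{eq:Ik-res}) means every entry of every differential among the $G_j$ is a non-unit (indeed a linear form), and the newly adjoined map $G_0 \to S$ has entries the generators of $I^k$, which are monomials of positive degree $n-1$, hence also non-units. Since no constant entries occur anywhere, the resolution is minimal and the $\beta_i(S/I^k)$ are genuine Betti numbers.

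Finally, by definition $\beta_i(S/I^k)$ is the rank of the free module at homological position $i$ in this resolution, which for $i \ge 1$ is exactly $G_{i-1}$. Substituting $j = i-1$ into the rank of $G_j$ gives
\[
\beta_i(S/I^k) = \binom{n+k-(i-1)-1}{n-1}\binom{n-1}{i-1} = \binom{n+k-i}{n-1}\binom{n-1}{i-1},
\]
as claimed. I expect no genuine obstacle: the entire substance is contained in the construction of (\ref{eq:Ik-res}), and the only point needing a sentence of justification is the minimality of the spliced resolution, which is immediate from the positive degree of the generators of $I^k$.
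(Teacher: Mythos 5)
Your proposal is correct and follows the same route as the paper: the paper's proof consists precisely of splicing the resolution (\ref{eq:Ik-res}) of $I^k$ with $0 \to I^k \to S \to S/I^k \to 0$ and reading off the ranks with the index shift $i \mapsto i-1$, exactly as you do (you simply make explicit the minimality check that the paper leaves implicit). One trivial slip: the generators of $I^k$ have degree $(n-1)k$, not $n-1$, but since all that matters is that they are non-units, the argument is unaffected.
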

\begin{proof}
 By (\ref{eq:Ik-res}) we have the minimal resolution
 \[
  0 \to S^{\binom{k}{n-1}} \to \dots \to S^{\binom{n+k-i-1}{n-1}\binom{n-1}{i}} \to \dots \to S^{\binom{n+k-1}{n-1}} \to S \to S/I^k \to 0
 \]
of $S/I^k$.
  \end{proof}

\section{Further questions and problems}
Lastly, we post some problems that arise from the topics of this paper. We will refer to the classes {\bf A} and {\bf B} introduced in Section 1. 

\begin{enumerate}
 \item In Section \ref{sec:betti} we computed the Betti numbers for the ideals in {\bf A} with $d=2,3$ and $n-1$, and the ideals in {\bf B} with $s=n-1$, and $n-2$. Can we cover more instances of these two classes, by combining Theorem \ref{thm:main} with some other method? Are there other classes for which we can use Theorem \ref{thm:main} to compute some of the Betti numbers?  
 \item The main question that we came across in Section \ref{sec:rees} is whether all polymatroidal ideals have their Rees ideals generated in the degrees $(0,2)$ and $(1,1)$. One may also ask if the Rees algebras of the ideals in Theorem \ref{thm:rees} are Koszul, as the defining ideals are generated in total degree two.
 \item Is there a nice description of all monomial ideals with linear powers, which have their Rees ideals generated in degrees $(0,2)$ and $(1,1)$? For example, one can try to classify all edge ideals with this property. 
\end{enumerate}

\section*{Acknowledgements}
The author would like to thank Ralf Fröberg, Christian Gottlieb, and Samuel Lundqvist for the rewarding discussions about the topics of this paper.

% 
% \bibliography{references}{}
% \bibliographystyle{plain}
%%

\end{document}